\newcommand{\overbar}[1]{\mkern 1mu\overline{\mkern-1mu#1\mkern-1mu}\mkern 1mu}
\DeclareMathOperator{\id}{id}
\newcommand{\modu}[1]{\left |#1\right |}
\newcommand{\Leb}{\mathscr{L}}
\newcommand{\setR}{\mathbb{R}}
\newcommand{\R}{\mathbb{R}}
\newcommand{\p}{\mathtt p} 
\newcommand{\de}{\ensuremath{\, \mathrm d}} 
\newcommand{\suchthat}{\ensuremath{\,:\,}} 
\newcommand\restr[2]{{
  \left.\kern-\nulldelimiterspace 
  #1 
  \right|_{#2} 
  }}
\newcommand{\CD}{\mathsf{CD}}
\newcommand{\cd}{\mathsf{CD}}
\newcommand{\RCD}{\mathsf{RCD}}
\newcommand{\MCP}{\mathsf{MCP}}
\newcommand{\X}{\mathsf{X}}
\newcommand{\di}{\mathsf d} 
\newcommand{\m}{\mathfrak m} 
\DeclareMathOperator{\Ent}{Ent}
\DeclareMathOperator{\Geo}{Geo}
\newcommand{\Prob}{\mathscr{P}}
\newcommand{\ProbTwo}{\mathscr{P}_2}
\author{Mattia Magnabosco \footnote{Institute for Applied Mathematics - Universit\"at Bonn, \textit{magnabosco@iam.uni-bonn.de}}}
\title{\textbf{Examples of $\cd(0,N)$ spaces with non-constant dimension}}
\date{}
\newtheoremstyle{remark}
        {10pt}
        {10pt}
        {}
        {}
        {\itshape}
        {.}
        {.4em}
        {}
\newtheoremstyle{proof}
        {10pt}
        {10pt}
        {}
        {}
        {\itshape}
        {.}
        {.4em}
        {}
\newtheoremstyle{definition}
        {10pt}
        {10pt}
        {}
        {}
        {\bfseries}
        {.}
        {.4em}
        {}
\newtheoremstyle{theorem}
        {10pt}
        {10pt}
        {\slshape}
        {}
        {\bfseries}
        {.}
        {.4em}
        {}
\theoremstyle{theorem}
\newtheorem{theorem}{Theorem}[section]
\newtheorem{prop}[theorem]{Proposition}
\newtheorem{corollary}[theorem]{Corollary}
\newtheorem{lemma}[theorem]{Lemma}
\theoremstyle{definition}
\newtheorem{definition}[theorem]{Definition}
\theoremstyle{remark}
\newtheorem{remark}[theorem]{Remark}
\newtheorem{example}[theorem]{Example}
\theoremstyle{proof}
\newtheorem*{pro}{Proof}
\newenvironment{pr}{\begin{pro}%
 \pushQED{\qed}}%
 {\popQED\end{pro}}
\begin{document}
\maketitle

\begin{abstract}
    In this work, we generalize the results obtained in \cite{MR4402722}, presenting some examples of $\cd(0,N)$ spaces having different dimensions in different regions, deducing in particular that the topological splitting may fail in $\cd(0,N)$ spaces. We also observe that any reasonable non-branching condition may fail in $\cd(0,N)$ spaces and that the existence of an optimal transport map, between two absolutely continuous marginals, is not guaranteed by the $\cd(0,N)$ condition, without requiring a non-branching assumption. Moreover, we show that the strict $\cd(0,N)$ condition is strictly stronger than the classical $\cd(0,N)$ one and it is not stable with respect to the measured Gromov-Hausdorff convergence.
\end{abstract}\vspace{10pt}

\section{Introduction}

In their seminal papers \cite{sturm2006,sturm2006ii,lottvillani}, Sturm and Lott--Villani introduced the so-called $\cd(K,N)$ condition, a synthetic notion representing a lower
curvature bound by $K$ and an upper bound on the dimension by $N$, formulated in the non-smooth setting of metric measure spaces. Their works are based on the observation that, for a (weighted) Riemannian manifold, having Ricci curvature bounded below and dimension bounded above, can be equivalently characterized in terms of a convexity property of the R\'enyi entropy functional, along Wasserstein geodesics. In particular, this property relies on the theory of optimal transport and does not require the smooth underlying structure, therefore it can be taken as  definition of curvature dimension bound for a metric measure space.\\

In this paper we show different examples of $\cd(0,N)$ spaces (i.e. spaces satisfying the $\cd(0,N)$ condition), having different singularities in their metric measure structure. This work is a twofold generalization of \cite{MR4402722}, where an example of a highly branching $\cd(0,\infty)$ space with non-constant (topological) dimension is constructed. On the one hand, we extend the result of \cite{MR4402722} to $\cd(0,N)$ spaces, having a finite dimensional bound $N$. This generalization is somewhat expected but far from trivial, in fact the finite dimensional $\cd$ condition implies some properties which are not guaranteed in $\cd(0,\infty)$ spaces (for example the Bishop-Gromov inequality). On the other hand, we prove the $\cd(0,N)$ condition for a class of metric measure spaces which is considerably larger than the one considered in \cite{MR4402722}. This allows to highlight other types of singular behaviour that are proved to be possible in $\cd$ spaces.\\

The main conclusions, which can be drawn from the examples of $\cd$ spaces considered in this paper, are the following: 
\begin{itemize}
    \item The constancy of dimension may fail in $\cd(0,N)$ spaces, i.e. there exists a $\cd(0,N)$ space having different topological and Hausdorff dimensions in different regions (see Section \ref{sec:conc1}). This is particularly interesting if seen in relation with the work of Brué and Semola \cite{bruesemola}, that proves constancy of dimension for $\RCD(K,N)$ spaces.
    \item The strict $\cd(0,N)$ condition (see Definition \ref{def:strictCD}) is not stable with respect to the measured Gromov-Hausdorff convergence. Moreover, the strict $\cd(0,N)$ condition is strictly stronger than the classical $\cd(0,N)$ one, i.e. there exists a $\cd(0,N)$ space which does not satisfy the strict $\cd(0,N)$ condition (see Section \ref{sec:conc2}).
    \item Any reasonably meaningful non-branching condition may fail in $\cd(0,N)$ spaces (see Section \ref{sec:conc2}).
    \item The existence of an optimal transport map between two absolutely continuous marginals is not guaranteed in $\cd(0,N)$ spaces, without assuming a non-branching condition (see Section \ref{sec:conc2}).
    \item The topological splitting may fail in $\cd(0,N)$ spaces, i.e. there exists a $\cd(0,N)$ space containing a subset isometric to $\R$, which does not topologically split as the product of $\R$ with another space (see Section \ref{sec:conc3}).
\end{itemize} \vspace{8pt}

This work, as it is for \cite{MR4402722}, is inspired by the work of Ketterer and Rajala \cite{ketterer2014failure}, where similar conclusions were drawn for spaces satisfying the measure contraction property $\MCP(0,N)$. We consider metric measure measure spaces having metric structure and singularities similar to the ones consider in \cite{ketterer2014failure}, but equipped with a more complicated measure that allows to achieve the $\cd$ condition (cfr. \cite[Remark 1]{ketterer2014failure}). This idea was already developed in \cite{MR4402722}, were the $\cd(0,\infty)$ condition was proved in a space with non-constant topological dimension. In this paper, we improve the strategy of \cite{MR4402722}, extending it to a larger class of spaces and refining different estimates and computations to prove the $\cd(0,N)$ condition. This generalization relies on the relation between the $(0,N)$-convexity of interpolating densities along geodesics and the $\cd(0,N)$ condition (see \cite[Proposition 4.2]{sturm2006ii}). The combination of this observation with a suitable version of the Jacobi equation proved by Rajala in \cite{rajala2013failure}, that allows to compute interpolating densities, results in Proposition \ref{prop:jacobi}, which will be fundamental to prove the $\cd(0,N)$ condition.\\

\noindent {\scshape Aknowlegments}: The author is grateful to Prof. Karl-Theodor Sturm for many inspiring discussions on the topic.

\section{Preliminaries}

\subsection{CD spaces}\label{sec:CD}
A triple $(X,\di,\m)$ is called metric measure space if $(X,\di)$ is a complete and separable metric space and $\m$ is a locally finite Borel measure on it.
Denote by $\Prob(X)$ the set of Borel probability measures on $X$ and by $\Prob_2(X) \subset \Prob(X)$ the set of those having finite second moment. We endow the space $\Prob_2(X)$ with the Wasserstein distance $W_2$, defined by
\begin{equation}
\label{eq:defW2}
    W_2^2(\mu_0, \mu_1) := \inf_{\pi \in \mathsf{Adm}(\mu_0,\mu_1)}  \int \di^2(x, y) \, \de \pi(x, y),
\end{equation}
where $\mathsf{Adm}(\mu_0, \mu_1)$ is the set of all the admissible transport plans between $\mu_0$ and $\mu_1$, namely all the measures in $\Prob(X^2)$ such that $(\p_1)_\sharp \pi = \mu_0$ and $(\p_2)_\sharp \pi = \mu_1$. We say that $\pi \in \mathsf{Adm}(\mu_0,\mu_1)$ is an optimal transport plan between $\mu_0$ and $\mu_1$ if it realizes the infimum in \eqref{eq:defW2}. Moreover, we say that an optimal transport plan $\pi$ is induced by a map if there exists $T$ measurable such that $\pi=(\id,T)_\# \mu_0$, in this case $T$ is said to be an optimal transport map.
For every $N>1$, define the R\'enyi entropy functional 
\begin{equation*}
    S_N(\mu) = - \int \rho^{1-\frac{1}{N}} \de \m,
\end{equation*}
where $\rho$ denotes the density of the absolutely continuous part of $\mu$ with respect to the reference measure $\m$, i.e. $\mu= \rho \m + \mu^s$ with $\mu^s \perp \m$. Call $\Prob_{ac}(X,\m)$ the set of all probability measures in $\Prob_2(X)$ which are absolutely continuous with respect to the reference measure $\m$.

\begin{definition}\label{def:CD}
Given $N>1$, the metric measure space $(X,\di,\m)$ is said to be a $\cd(0,N)$ space (or to satisfy the $\cd(0,N)$ condition) if for every pair of measures $\mu_0,\mu_1 \in \Prob_{ac}(X,\m)$ there exists a constant speed $W_2$-geodesic $(\mu_t)_{t\in [0,1]}\subset \Prob_{ac}(X,\m)$ connecting them, along which the entropy functional $S_{N'}$ is convex for every $N'\geq N$, i.e. 
\begin{equation*}
    S_{N'}(\mu_t) \leq (1-t) S_{N'}(\mu_0) + t S_{N'}(\mu_1), \quad \forall t \in [0,1].
\end{equation*}
\end{definition}

\begin{remark}\label{rmk:W2geo}
    Every constant speed $W_2$-geodesic $(\mu_t)_{t\in [0,1]}$ can be represented by a probability measure $\eta$ on the space $\Geo(X)$ of constant speed geodesics of $X$ (parameterized on $[0,1]$), meaning that $(e_t)_\# \eta=\mu_t$ for every $t\in [0,1]$, where $e_t$ is the evaluation map at time $t$.
\end{remark}

\noindent As every convexity property, the $\cd(0,N)$ condition can be equivalently characterized just looking at midpoints instead of whole geodesics (cfr. \cite[Proposition 1.8]{MR4402722}):

\begin{prop}\label{prop:CDinmidpoint}
    The metric measure space $(X,\di,\m)$ is a $\CD(0,N)$ space if for every pair $\mu_0,\mu_1\in\Prob_{ac}(X,\m)$ there exists a midpoint $\nu\in\Prob_{ac}(X,\m)$ of $\mu_0$ and $\mu_1$, such that for every $N'\geq N$ 
\begin{equation*}\label{eq:midpointconv}
    S_{N'}(\nu) \leq \frac12 S_{N'}(\mu_0) + \frac12 S_{N'}(\mu_1).
\end{equation*}
\end{prop}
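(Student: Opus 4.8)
\begin{pr}
The plan is to promote the midpoint inequality to full geodesic convexity by a dyadic interpolation scheme, and then to recover the absolute continuity of the interpolating measures, which is the genuinely delicate point. Fix $\mu_0,\mu_1\in\Prob_{ac}(X,\m)$ and put $D:=W_2(\mu_0,\mu_1)$. First I would iterate the hypothesis: let $\mu_{1/2}$ be the absolutely continuous midpoint provided for the pair $(\mu_0,\mu_1)$, then apply the hypothesis to $(\mu_0,\mu_{1/2})$ and to $(\mu_{1/2},\mu_1)$ to produce $\mu_{1/4}$ and $\mu_{3/4}$, and keep bisecting, defining $\mu_q\in\Prob_{ac}(X,\m)$ for every dyadic $q\in[0,1]$ by taking $\mu_{(2k+1)/2^{n}}$ to be the hypothesis-midpoint of the two adjacent dyadic measures of level $n-1$. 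Since each bisection halves the Wasserstein distance between neighbours, a telescoping argument together with the triangle inequality forces $W_2(\mu_s,\mu_t)=|s-t|\,D$ for all dyadic $s,t$, so $q\mapsto\mu_q$ is $D$-Lipschitz on the dyadics. At the same time, setting $\ell_{N'}(t):=(1-t)S_{N'}(\mu_0)+t\,S_{N'}(\mu_1)$, an induction on the dyadic level gives $S_{N'}(\mu_q)\le\ell_{N'}(q)$ for every dyadic $q$ and every $N'\ge N$: at the bisection point $c=(a+b)/2$ the midpoint inequality and the inductive bounds yield $S_{N'}(\mu_c)\le\tfrac12 S_{N'}(\mu_a)+\tfrac12 S_{N'}(\mu_b)\le\tfrac12\ell_{N'}(a)+\tfrac12\ell_{N'}(b)=\ell_{N'}(c)$, the last equality holding because $\ell_{N'}$ is affine.

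Next I would pass to the full interval. As $(\Prob_2(X),W_2)$ is complete, the $D$-Lipschitz map defined on the dense set of dyadics extends uniquely to a constant speed $W_2$-geodesic $(\mu_t)_{t\in[0,1]}$ from $\mu_0$ to $\mu_1$. Fixing $t$ and choosing dyadics $q_n\to t$, one has $\mu_{q_n}\to\mu_t$ in $W_2$, hence weakly, while the measures remain tight because they lie on a fixed geodesic. The lower semicontinuity of $S_{N'}$ with respect to weak convergence then gives $S_{N'}(\mu_t)\le\liminf_n S_{N'}(\mu_{q_n})\le\liminf_n\ell_{N'}(q_n)=\ell_{N'}(t)$, so the convexity inequality holds at every $t\in[0,1]$, for every $N'\ge N$.

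The main obstacle is that this argument does not yet guarantee $\mu_t\in\Prob_{ac}(X,\m)$: a bound on a single $S_{N'}$ is compatible with concentration onto an $\m$-negligible set, so finiteness of the entropy cannot by itself exclude a singular part in the limit. To overcome this I would use the whole family $\{S_{N'}\}_{N'\ge N}$. Writing $\mu_t=\rho_t\,\m+\mu_t^s$ with $\mu_t^s\perp\m$, the pointwise limit $\rho_t^{1-1/N'}\to\rho_t$ as $N'\to\infty$ and the dominating function $\rho_t^{1-1/N}+\rho_t\in L^1(\m)$ (integrable since $S_N(\mu_t)\le\ell_N(t)$ is finite) let me apply dominated convergence to obtain $\lim_{N'\to\infty}S_{N'}(\mu_t)=-\int\rho_t\,\de\m=-(1-\mu_t^s(X))$. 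The same computation applied to the absolutely continuous $\mu_0,\mu_1$ gives $\ell_{N'}(t)\to-1$. Letting $N'\to\infty$ in $S_{N'}(\mu_t)\le\ell_{N'}(t)$ then yields $-1+\mu_t^s(X)\le-1$, whence $\mu_t^s(X)=0$ and $\mu_t\in\Prob_{ac}(X,\m)$. This delivers the required absolutely continuous, entropy-convex constant speed geodesic, so $(X,\di,\m)$ satisfies $\CD(0,N)$. The points demanding care are the lower semicontinuity of $S_{N'}$ along the geodesic and the justification of the limit $N'\to\infty$, in particular the finiteness of $S_N(\mu_t)$ underpinning the dominated convergence step.
\end{pr}
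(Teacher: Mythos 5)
Your overall scheme --- dyadic midpoint iteration, the telescoping argument giving $W_2(\mu_s,\mu_t)=|s-t|\,D$ on dyadics, extension by completeness of $(\Prob_2(X),W_2)$, lower semicontinuity of $S_{N'}$ at non-dyadic times, and the limit $N'\to\infty$ to exclude a singular part --- is the standard route for this statement. The paper itself does not reprove the proposition but defers to \cite[Proposition 1.8]{MR4402722}, whose argument is of exactly this type, so your proposal matches the intended proof in structure; the dyadic construction, the distance computation and the induction yielding $S_{N'}(\mu_q)\le (1-q)S_{N'}(\mu_0)+qS_{N'}(\mu_1)$ are all correct.

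There is, however, one genuinely flawed justification, precisely at the step you yourself flag as delicate. You claim the dominating function $\rho_t+\rho_t^{1-1/N}$ is in $L^1(\m)$ ``since $S_N(\mu_t)\le\ell_N(t)$ is finite''. This is backwards: since $S_N(\mu_t)=-\int\rho_t^{1-1/N}\de\m$, the inequality $S_N(\mu_t)\le\ell_N(t)$ is an \emph{upper} bound on $S_N(\mu_t)$, i.e.\ a \emph{lower} bound on $\int\rho_t^{1-1/N}\de\m$, and is perfectly compatible with $\int\rho_t^{1-1/N}\de\m=+\infty$ (that is, $S_N(\mu_t)=-\infty$). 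So as written, the dominated convergence step is not justified. What actually delivers the domination is finiteness of the reference measure: by H\"older's inequality, $\int\rho^{1-1/N}\de\m\le\m(X)^{1/N}\big(\int\rho\,\de\m\big)^{1-1/N}\le\m(X)^{1/N}<\infty$ for every probability measure, which holds in the setting of this paper since $X_f$ is compact and $\m_{f,K}$ is finite. Note that the same hypothesis $\m(X)<\infty$ is also what validates your appeal to weak lower semicontinuity of $S_{N'}$: for an infinite reference measure (e.g.\ Lebesgue on $\R$) narrow lower semicontinuity of the R\'enyi entropy can fail, so both of your ``points demanding care'' silently use finiteness of $\m$, which should be stated explicitly. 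With these corrections the proof is complete and coincides in substance with the cited one.
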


The $\cd$ condition can be defined also when the dimensional parameter takes the value $\infty$, requiring convexity of the entropy functional $\Ent:\ProbTwo(X)\to \R \cup \{+\infty\}$, defined as
\begin{equation}\label{eq:BSEnt}
    \Ent(\mu):= 
    \begin{cases}
    \int \rho \log \rho \de \m  &\text{if }\mu\ll\m \text{ and } \mu=\rho\m\\
    +\infty &\text{otherwise}
    \end{cases},
    \end{equation} 
which is the limit of $N+ N S_N$ as $N \to \infty$.
\begin{definition}\label{def:CDinfty}
A metric measure space $(X,\di,\m)$ is said to be a $\cd(0,\infty)$ space (or to satisfy the $\cd(0,\infty)$ condition) if for every pair of measures $\mu_0,\mu_1 \in \Prob_{ac}(X,\m)$ there exists a constant speed $W_2$-geodesic $(\mu_t)_{t\in [0,1]}$ connecting them, along which the entropy functional $\Ent$ is convex, i.e. 
\begin{equation*}
   \Ent(\mu_t) \leq (1-t) \Ent(\mu_0) + t \Ent(\mu_1), \quad \forall t \in [0,1].
\end{equation*}
\end{definition}

\begin{remark}\label{rmk:Ent}
    Observe that the entropy functionals $\Ent$ an $S_N$ (for some $N>1$) have different behaviour on singular measures, in fact $\Ent(\mu)=+\infty$ whenever $\mu\not\ll\m$, while the singular part of $\mu$ does not contribute to the value of $S_N(\mu)$. This substantial difference is the reason why, in the definition of the $\cd(0,\infty)$ condition there is no need to require the Wasserstein geodesic to be contained in $\Prob_{ac}(X,\m)$.
\end{remark}

\noindent The $\cd(K,N)$ condition can be defined for every real value of the dimensional parameter $K$, but its formulation becomes more complicated as it involves the distortion coefficients $\tau_{K,N}^{(t)}$. We refer to \cite{sturm2006ii} for the precise definition of the $\cd(K,N)$ condition (with $K\neq0$), this general notion will not be used in this work. Among other nice properties, the $\cd(K,N)$ condition has the monotonicity in the parameters that we expect from a requirement that represents a lower curvature bound and an upper dimensional bound, i.e.
\begin{equation*}
    \cd(K',N') \implies \cd(K'',N'') \quad \text{if }K'\geq K'' \text{ and }N'\leq N'',
\end{equation*}
for every $K',K''\in \R$ and $N',N''\in(1,\infty]$.

\subsection{$(K,N)$-convexity of functions}

Given $K\in \R$, $N>0$ and an interval $I\subseteq \R$, a function $g\in C^2\big(I, \R\big)$ is said to be $(K,N)$-convex if 
\begin{equation*}
    g''(x)\geq K + \frac 1N (g'(x))^2  \quad \text{for every }x\in I.
\end{equation*}
In working with $(K,N)$-convex is often convenient to use the following equivalent characterization: $g\in C^2\big(I, \R\big)$ is $(K,N)$-convex if and only if the function $g_N:= e^{-g/N}$ satisfies 
\begin{equation*}
    g_N''(x) \leq - \frac{K}{N} g_N.
\end{equation*}
As a consequence, we also deduce that:
\begin{equation}\label{eq:0Nconv=concavity}
    g\in C^2\big(I, \R\big) \text{ is }(0,N)\text{-convex if and only if $g_N$ is concave.}
\end{equation}

\begin{example} \label{ex:KNconv}
    The function $-\log:(0,\infty)\to \R$ is $(0,1)$-convex, while for every $K>0$ the function 
    \begin{equation*}
        [0,1] \ni x \mapsto K x^2
    \end{equation*}
    is $(0,2K)$-convex.
\end{example}

\begin{remark}\label{rmk:repar}
    The $(0,N)$-convexity is invariant by linear reparametrization, meaning that, if $g\in C^2\big(I, \R\big)$ is $(0,N)$-convex, then for every $\alpha\in \R$ and $\beta\neq 0$ the function 
    \begin{equation*}
        t \mapsto g( \alpha +\beta t)
    \end{equation*}
    is $(0,N)$-convex (where defined). More in general, if $g\in C^2\big(I, \R\big)$ is $(K,N)$-convex, then for every $\alpha\in \R$ and $\beta\neq 0$ the function $t \mapsto g( \alpha +\beta t)$ is $(\beta^2 K,N)$-convex.
\end{remark}

The $(K,N)$-convexity enjoys different nice properties, among these the following additivity for the constants will be particularly important in section \ref{section:CDproof}.
\begin{lemma}\label{lem:sumKN}
 If $g\in C^2\big(I, \R\big)$ is $(K_1,N_1)$-convex and $h\in C^2\big(I
 , \R\big)$ is $(K_2,N_2)$-convex, then $g+h$ is $(K_1+K_2,N_1+N_2)$-convex,
\end{lemma}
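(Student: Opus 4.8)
The plan is to reduce the statement to a single pointwise algebraic inequality, exploiting that $(K,N)$-convexity is defined by an inequality that must hold at each $x\in I$ separately. First I would use linearity of differentiation: on all of $I$ we have $(g+h)'=g'+h'$ and $(g+h)''=g''+h''$. Adding the two defining inequalities
\begin{equation*}
    g''(x)\geq K_1+\frac{1}{N_1}(g'(x))^2, \qquad h''(x)\geq K_2+\frac{1}{N_2}(h'(x))^2,
\end{equation*}
yields immediately
\begin{equation*}
    (g+h)''(x)\geq (K_1+K_2)+\frac{1}{N_1}(g'(x))^2+\frac{1}{N_2}(h'(x))^2.
\end{equation*}
Comparing this with the target inequality $(g+h)''(x)\geq (K_1+K_2)+\frac{1}{N_1+N_2}((g+h)'(x))^2$, it suffices to prove that, writing $a=g'(x)$ and $b=h'(x)$, one has
\begin{equation*}
    \frac{a^2}{N_1}+\frac{b^2}{N_2}\geq \frac{(a+b)^2}{N_1+N_2}
\end{equation*}
for all real $a,b$, since the right-hand side here is exactly $\frac{1}{N_1+N_2}((g+h)'(x))^2$.

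Next, I would establish this pointwise inequality. The cleanest route is the Cauchy--Schwarz inequality applied to the vectors $(\sqrt{N_1},\sqrt{N_2})$ and $(a/\sqrt{N_1},\,b/\sqrt{N_2})$, which gives $(a+b)^2\leq (N_1+N_2)\big(\frac{a^2}{N_1}+\frac{b^2}{N_2}\big)$; dividing by $N_1+N_2>0$ produces the claim. Equivalently, one may clear the (positive) denominators and expand, whereupon the inequality collapses to $(N_2 a-N_1 b)^2\geq 0$, which holds trivially. Either computation makes the whole lemma drop out, since the estimate holds at every point $x\in I$.

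I do not expect any genuine obstacle: the statement is purely pointwise and, after using linearity of the derivative, reduces to an elementary Cauchy--Schwarz estimate for the gradient-squared terms. The only point to keep track of is that $N_1,N_2>0$, so that $N_1+N_2>0$ and all the denominators are legitimate; this is guaranteed by the standing assumption on the dimensional parameters in the definition of $(K,N)$-convexity.
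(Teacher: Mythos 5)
Your proof is correct. Note that the paper itself contains no argument for Lemma \ref{lem:sumKN}: it simply delegates the proof to \cite[Lemma 2.10]{MR3385639}, so there is no in-text proof to compare against, and your contribution is precisely to supply a self-contained elementary proof in the pointwise $C^2$ setting in which the lemma is stated and used here. Your reduction is the natural one: by linearity of differentiation and addition of the two defining inequalities, everything comes down to the estimate
\begin{equation*}
    \frac{a^2}{N_1}+\frac{b^2}{N_2}\geq \frac{(a+b)^2}{N_1+N_2}, \qquad a=g'(x),\; b=h'(x),
\end{equation*}
and both of your justifications for it are sound: the Cauchy--Schwarz inequality applied to $(\sqrt{N_1},\sqrt{N_2})$ and $\bigl(a/\sqrt{N_1},\,b/\sqrt{N_2}\bigr)$, or clearing denominators, where multiplying the difference of the two sides by $N_1N_2(N_1+N_2)>0$ yields exactly $N_2^2a^2+N_1^2b^2-2N_1N_2\,ab=(N_2a-N_1b)^2\geq 0$. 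You also correctly identify the only hypothesis that matters, namely $N_1,N_2>0$, which is part of the paper's standing convention for $(K,N)$-convexity. The trade-off relative to the paper's citation is the expected one: the reference \cite{MR3385639} treats $(K,N)$-convexity in greater generality (beyond the smooth, one-dimensional formulation via a pointwise differential inequality), whereas your argument is tailored to the $C^2$ definition on an interval; since that is the only form in which the lemma is invoked in this paper (in the proof of Theorem \ref{thm:CDTheExample}, always applied to smooth functions of the interpolation parameter), your proof is fully adequate for its role here.
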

\noindent For a proof of this lemma we refer to \cite[Lemma 2.10]{MR3385639}.

We conclude the subsection with a technical lemma that will be useful to prove the main result (Theorem \ref{thm:CDTheExample}).

\begin{lemma}\label{lem:approxconv}
    Given $A\in [0,\infty)$ and $\delta \in \big(-\frac 1{2^{11}}, \frac 1{2^{11}}\big)$, there exists a function $h:[0,1] \to \R$ strictly positive on $[0,1)$, with $h(0)=1$, $h(1)=A$ and $h\big(\frac 12\big)= 1 + \big(\frac12+\delta\big)(A-1)$, such that $t\mapsto-\log(h(t))$ is $(- 2^{21}\, \delta^2,2)$-convex
\end{lemma}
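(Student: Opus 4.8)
The plan is to reduce the statement to a differential inequality for $u := \sqrt{h}$ and then to take $u$ to be the quadratic interpolant of the three prescribed values. By the characterization recalled just before \eqref{eq:0Nconv=concavity} (applied with $N=2$ and $K=-2^{21}\delta^2$), the function $-\log h$ is $(-2^{21}\delta^2,2)$-convex exactly when $u = e^{-(-\log h)/2} = \sqrt{h}$ satisfies $u'' \le 2^{20}\delta^2\, u$. Hence it suffices to construct a $C^2$ function $u$, strictly positive on $[0,1)$, with $u(0)=1$, $u(1)=\sqrt{A}=:b$, $u(\tfrac12)=\sqrt{1+(\tfrac12+\delta)(A-1)}=:m$ and $u'' \le 2^{20}\delta^2 u$; then $h:=u^2$ has all the required properties. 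Note that $m$ is well defined, since $1+(\tfrac12+\delta)(A-1)\ge \tfrac12-\delta>0$.

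First I would let $u$ be the unique quadratic through $(0,1)$, $(\tfrac12,m)$, $(1,b)$. A direct computation gives the constant second derivative $u''=4(b+1-2m)$, and, writing $s:=A-1=b^2-1$ and $p:=\tfrac{b+1}{2}$, the elementary identity $m^2=p^2+\tfrac{(b-1)^2}{4}+\delta s$ yields the closed form
\begin{equation*}
u'' = \frac{-2(b-1)^2-8\delta s}{p+m}.
\end{equation*}
If $u''\le 0$ there is nothing to prove, because then $u''\le 0\le 2^{20}\delta^2 u$ wherever $u\ge 0$; thus the only genuine case is $u''>0$. Here I would exploit that a positive numerator forces $b$ to lie very close to $1$: from $-2(b-1)^2-8\delta s>0$ and $s=(b-1)(b+1)$ one obtains $|b-1|<4|\delta|(b+1)$, so with $|\delta|<2^{-11}$ the point $b$ sits in a tiny neighbourhood of $1$. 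Feeding this bound back into the numerator gives $u''\le \tfrac{8|\delta s|}{p+m}<\tfrac{32\,\delta^2(b+1)^2}{p+m}\le 64\,\delta^2(b+1)$, while the same smallness of $|b-1|$, together with $u'(\tfrac12)=b-1$ and convexity, forces $\min_{[0,1]}u\ge \tfrac12$. Comparing $64\,\delta^2(b+1)\le 2^{20}\delta^2\cdot\tfrac12$ (which holds with enormous room, since $b+1$ is close to $2$) establishes $u''\le 2^{20}\delta^2 u$. Positivity of $u$ on $[0,1)$ is immediate in the concave case, where $u$ dominates the affine interpolant of its endpoints, and follows from $\min u\ge\tfrac12$ in the convex case; hence $h=u^2>0$ on $[0,1)$ as required.

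The main obstacle is precisely this convex case. The prescribed midpoint value of $h$ is a perturbation of order $\delta$ of the linear interpolation, so a naive estimate would only yield $u''$ of order $\delta$, which cannot be controlled by $2^{20}\delta^2 u$ of order $\delta^2$ for small $\delta$. The resolution — and the reason for the specific constants $2^{11}$ and $2^{21}$ — is the observation above that $u$ can be convex only in a window of width $O(\delta)$ around $b=1$; this upgrades the bound on $u''$ from order $\delta$ to order $\delta^2$ and closes the argument with room to spare. I would finally remark that $-\log h = -2\log u$ is $C^2$ wherever $u>0$, i.e.\ on $[0,1)$ (and on all of $[0,1]$ when $A>0$), which is where the claimed $(-2^{21}\delta^2,2)$-convexity is asserted.
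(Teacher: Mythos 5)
Your proof is correct, and it reaches the lemma by a genuinely different construction from the paper's. The paper works directly at the level of $h$: it fixes a $C^2$ bump $\phi$ with $\phi(\tfrac12)=1$, $\phi\equiv 0$ on $[0,\tfrac14]\cup[\tfrac34,1]$, $|\phi'|\le 2^4$, $|\phi''|\le 2^7$, sets $h(t)=1+t(A-1)+\delta\phi(t)(A-1)$, and verifies the inequality $\tfrac12 (h'/h)^2-h''/h\ge -2^{21}\delta^2$ in two regimes: when $|A-1|\ge 2^{11}\delta$, the gradient term $(h')^2\ge\tfrac12|A-1|^2$ dominates $2|hh''|\le\tfrac12|A-1|^2$ and one even gets $(0,2)$-convexity; when $|A-1|<2^{11}\delta$, one uses $|h''|\le 2^7|\delta|\,|A-1|\lesssim \delta^2$ together with $h\ge\tfrac18$ on $[\tfrac14,\tfrac34]$. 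You instead linearize the problem through the characterization stated just before \eqref{eq:0Nconv=concavity}, passing to $u=\sqrt h$ and the inequality $u''\le 2^{20}\delta^2 u$, and you take $u$ to be the quadratic interpolant of the square roots of the three prescribed values, so that $h$ is an explicit quartic polynomial and $u''$ is a computable constant. Your case split (sign of $u''$) plays the role of the paper's size split, and the crucial mechanism is identical in both arguments: in the regime where convexity genuinely fails, the data are confined to an $O(\delta)$ window ($|\sqrt A-1|<4|\delta|(\sqrt A+1)$ for you, $|A-1|<2^{11}\delta$ for the paper), which upgrades the defect from $O(\delta)$ to $O(\delta^2)$. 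What your route buys is explicitness and slack: no auxiliary bump function to fix, a closed-form $h$, and a final comparison $u''\lesssim 2^7\delta^2\le 2^{19}\delta^2\le 2^{20}\delta^2\min u$ with enormous margin (your sketch of $\min u\ge\tfrac12$ via the tangent line at $t=\tfrac12$ and the smallness of $|s|$ is easily completed). What the paper's route buys is locality and directness: its $h$ agrees with the linear interpolant near $t=0,1$, and no change of variables is needed. Finally, both proofs share the same boundary caveat when $A=0$, where $-\log h$ is finite and $C^2$ only on $[0,1)$; your closing remark addresses this explicitly, which is if anything slightly more careful than the paper.
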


\begin{pr}
    Let $\phi:[0,1]\to [0,1]$ be a $C^2$ function such that $\phi\big(\frac 12\big)=1$, $\phi(t)=0$ if $t\in\big[0,\frac 14\big]\cup\big[\frac 34,1\big]$ and $|\phi'|\leq 2^4,\,|\phi''|\leq 2^7$ on $[0,1]$. Define $h$ as 
    \begin{equation*}
        h(t)= 1 + t(A-1) + \delta \phi(t) (A-1),
    \end{equation*}
    observe that 
    \begin{equation*}
         h'(t)= (A-1) [1+ \delta \phi'(t)] \qquad \text{and}\qquad h''(t)= \delta \phi''(t) (A-1).
    \end{equation*}
    Now, we want to prove that 
    \begin{equation}\label{eq:KNconvlogh}
         \inf_{t\in[0,1]}  (-\log(h(t)))'' -  \frac12 [(-\log(h(t)))']^2 =   \inf_{t\in[0,1]}   \frac 12 \frac{h'(t)^2}{h(t)^2} - \frac{h''(t)}{h(t)} \geq  -  2^{21}\, \delta ^2,
    \end{equation}
    to this aim we divide the problem in two cases.
    First of all, we prove \eqref{eq:KNconvlogh} when $|A-1|\geq 2^{11} \delta$. In this case we have that 
    \begin{equation*}
    \begin{split}
        |h(t) h''(t)| = \big|\phi''(t) \big[ \delta (A-1) + \delta (A-1)^2 (1+&\delta \phi(t))\big]\big| \\
        &\leq 2^7 \left[ \frac{1}{2^{11}} |A-1|^2 + \frac{1}{2^{10}} |A-1|^2 \right] \leq \frac 14 |A-1|^2
    \end{split}
    \end{equation*}
    while, on the other hand,
    \begin{equation*}
        h'(t)^2\geq \frac{1}{2} |A-1|^2.
    \end{equation*}
    Putting together these two inequalities, we conclude that
    \begin{equation*}
        \inf_{t\in[0,1]}  (-\log(h(t)))'' -  \frac12 [(-\log(h(t)))']^2 = \inf_{t\in[0,1]}\frac{h'(t)^2-2 h(t) h''(t)}{2 h(t)^2} \geq 0,
    \end{equation*}
    which in particular implies \eqref{eq:KNconvlogh}.
    Assume now that $|A-1|< 2^{11} \delta $. Notice that
    \begin{equation*}
       \inf_{t\in[0,1]}  (-\log(h(t)))'' -  \frac 12[(-\log(h(t)))']^2 \geq -\sup_{t\in[0,1]} \frac{h''(t)}{h(t)} = -\sup_{t\in[\frac 14,\frac 34]} \frac{h''(t)}{h(t)}
    \end{equation*}
    where the last equality is true because $\phi$ is constant on $\big[0,\frac 14\big] \cup \big[\frac 34, 1\big]$. Moreover, observe that, since $|\delta|<\frac 1{2^{11}}$, we have $h(t)\geq \frac 18$ on $\big[\frac 14, \frac 34\big]$, therefore we deduce
    \begin{equation*}
        \inf_{t\in[0,1]}  (-\log(h(t)))'' -  \frac 12[(-\log(h(t)))']^2 \geq -8 \sup_{t\in[\frac 14,\frac 34]} |h''(t)| \geq -8 \sup_{t\in[0,1]} \delta |\phi''(t)| |A-1|\geq - 2^{21}\, \delta^2,
    \end{equation*} 
    concluding the proof.
\end{pr}

\subsection{Definition of the Metric Measure Spaces}\label{sec:defofMMS}

In this section we define the metric measure spaces that will be studied in the following. 
For every $0<k<\frac14$, introduce the class
\begin{equation}\label{eq:defFk}
    \mathscr{F}_k:= \big\{f \in C^2\big([-1,1]\big) \,:\, 0<f < 3k, \, |f'|\leq k, \, |f''|\leq 1 \big\}.
\end{equation}
Then, for every $f\in \mathscr F_k$ define the set 
\begin{equation*}
    X_f=\{(x,y)\in \setR^2\suchthat x \in [-1,1] \text{ and } 0\leq y\leq f(x)\}.
\end{equation*}
This space will be equipped by the distance $\di_\infty$ induced by the $l_\infty$ norm on $\R^2$, that is
\begin{equation*}
    \di_\infty \big((x_1,y_1), (x_2,y_2) \big) = \max \{ |x_2-x_1|, |y_2-y_1| \}.
\end{equation*}
Observe that, since we imposed $k$ to be less than $\frac 14$, $\di_\infty$ is a geodesic distance on $X_f$. Finally, for every $K\geq1$, define the measure $\m_{f,K}$ on $X_f$ as 
\begin{equation*}
    \m_{f,K} =m_{f,K}(x,y)\cdot\Leb^2|_{X_{f}}:=\frac{1}{f(x)} \exp\left(-K \left(\frac{y}{f(x)} \right)^2\right)\cdot \Leb^2|_{X_f}.
\end{equation*}
\noindent  A simple computation shows that for every $f\in \mathscr F_k$ and $K\geq 1$ it holds that
\begin{equation*}
    (\p_x)_\# \mathfrak{m}_{f,K}=   C_{K}\cdot\chi_{\{-1\leq x\leq 1\}}\cdot\mathcal{H}^1,
\end{equation*}
 where $C_{K}= \int_0^1 e^{-K y^2} \de y$ and $\p_x$ denotes the projection on the $x$-axis.

In section \ref{section:CDproof} we will prove that it is possible to find constants $k\in (0,\frac 14)$, $K\geq1$ and $N>1$ such that, for every $f\in \mathscr F_k$, the metric measure space $(X_f,\di_\infty, \m_{f,K})$ satisfies the $\cd(0,N)$ condition. In the following, we will assume to have fixed an $f\in \mathscr F_k$ and we will develop an argument that only uses the properties of $f$, see \eqref{eq:defFk}, proving in particular the result for the whole class of functions. Moreover, in order to ease the notation, we will usually denote the space $(X_f,\di_\infty, \m_{f,K})$ simply by $(X,\di,\m)$.

\subsection{How to Prove Convexity of the Entropy}\label{sec:jacobi}

In this section we prove an important result (Proposition \ref{prop:jacobi}) that will be a fundamental ingredient in proving the CD condition. The proof of Proposition \ref{prop:jacobi} relies on the possibility to compute the density of a pushforward measure, through Jacobi equation. For example, take two measures $\mu_0,\mu_1\in \ProbTwo(\R^2)$ absolutely continuous with respect to the Lebesgue measure $\Leb^2$, with densities $\rho_0$ and $\rho_1$. If there exists a smooth one-to-one map $T:\R^2 \to \R^2$ such that $T_\#\mu_0=\mu_1$, then Jacobi equation ensures that 
\begin{equation} \label{eq:jacobi}
    \rho_1(T(x,y)) J_T(x,y)= \rho_0(x,y),
\end{equation}
for $\mu_0$-almost every $(x,y)$. The assumptions on the map $T$ can be relaxed in different ways, in this work we are particularly interested in the following version, which is a straightforward consequence of \cite[Proposition 2.1]{rajala2013failure}.

\begin{prop}\label{prop:rajalajacobi}
Let $\mu_0,\mu_1\in \ProbTwo(\R^2)$ be absolutely continuous with respect to the Lebesgue measure $\Leb^2$ and assume that there exists a map $T=(T_1,T_2)$ which is injective outside a $\mu_0$-null set, such that $T_\# \mu_0=\mu_1$. Suppose also that $T_1$ locally does not depend on the $y$ coordinate and it is increasing in $x$, while $T_2$ is increasing in $y$ for every fixed $x$. Then the Jacobi equation \eqref{eq:jacobi} is satisfied with $J_T=\frac{\partial T_1}{\partial x} \frac{\partial T_2}{\partial y}$.
\end{prop}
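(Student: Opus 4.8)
The plan is to reduce the statement to the one-dimensional change-of-variables result contained in \cite[Proposition 2.1]{rajala2013failure}, exploiting the triangular structure that the hypotheses impose on $T$. The key observation is that, since $T_1$ locally does not depend on the $y$ coordinate, the (almost everywhere defined) differential of $T$ is lower triangular,
\begin{equation*}
    DT = \begin{pmatrix} \dfrac{\partial T_1}{\partial x} & 0 \\[6pt] \dfrac{\partial T_2}{\partial x} & \dfrac{\partial T_2}{\partial y} \end{pmatrix},
\end{equation*}
so that its determinant is exactly $J_T = \frac{\partial T_1}{\partial x}\frac{\partial T_2}{\partial y}$, and both diagonal entries are nonnegative because $T_1$ is increasing in $x$ and $T_2$ is increasing in $y$. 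Monotonicity also guarantees, via Lebesgue's differentiation theorem for monotone functions, that these partial derivatives exist $\Leb^2$-almost everywhere, hence $\mu_0$-almost everywhere since $\mu_0 \ll \Leb^2$.

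First I would disintegrate along the $x$-direction. Writing $\bar\mu_0 := (\p_x)_\# \mu_0$ and $\bar\mu_1 := (\p_x)_\# \mu_1$ for the $x$-marginals, the fact that $T_1$ locally depends only on $x$ and is increasing descends to a well-defined increasing map $\bar T_1$ on the line with $(\bar T_1)_\# \bar\mu_0 = \bar\mu_1$. Applying the one-dimensional statement of \cite[Proposition 2.1]{rajala2013failure} yields the Jacobi equation for the $x$-marginals, with Jacobian $\frac{\partial T_1}{\partial x}$. Next I would disintegrate $\mu_0 = \int \mu_0^x \, \de\bar\mu_0(x)$ and $\mu_1 = \int \mu_1^{x'} \, \de\bar\mu_1(x')$ along the fibers: the triangular structure forces $T$ to send the fiber over $x$ into the fiber over $\bar T_1(x)$, and on that fiber the map $y \mapsto T_2(x,y)$ is increasing and transports $\mu_0^x$ onto $\mu_1^{\bar T_1(x)}$. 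A second application of the one-dimensional result then gives the fiberwise Jacobi equation, with Jacobian $\frac{\partial T_2}{\partial y}$.

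Finally I would recombine the two pieces. Multiplying the $x$-marginal density relation by the fiberwise one, and using Fubini together with the product form of the disintegrated densities, produces the full two-dimensional identity $\rho_1(T(x,y)) \frac{\partial T_1}{\partial x}\frac{\partial T_2}{\partial y} = \rho_0(x,y)$ for $\mu_0$-almost every $(x,y)$, which is precisely \eqref{eq:jacobi} with the claimed $J_T$.

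The main obstacle I anticipate is the bookkeeping in the disintegration step: one must check that the conditional densities of $\mu_0$ and $\mu_1$ factor as the product of the marginal density and the fiber density in a way compatible with $T$, and that the hypothesis of \emph{local} independence of $T_1$ from $y$ (rather than a global one) suffices both to define $\bar T_1$ and to localize the argument on patches where $T_1 = T_1(x)$. Controlling the exceptional $\mu_0$-null set on which injectivity or differentiability may fail, and verifying that it does not affect the almost-everywhere identity, is the remaining routine point.
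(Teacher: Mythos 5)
Your disintegration strategy is sound in outline and genuinely different from what stands behind this statement in the paper: the paper offers no proof at all, deferring to \cite[Proposition 2.1]{rajala2013failure}, which is (essentially) the two-dimensional statement itself, proved there by a blow-up argument at Lebesgue density points rather than by disintegration. Relatedly, your citation is off: Rajala's Proposition 2.1 contains no one-dimensional statement; what your argument actually needs is the classical change-of-variables formula for a monotone map pushing one absolutely continuous measure on $\R$ onto another, which is fine, but should be quoted as such.

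The genuine gap is the step you set aside as bookkeeping. The hypothesis is that $T_1$ is \emph{locally} independent of $y$, and under this hypothesis no globally defined $\bar T_1$ with $T_1(x,y)=\bar T_1(x)$ need exist, so the very first step ``$(\bar T_1)_\#\bar\mu_0=\bar\mu_1$'' is not available. Worse, the local case is not a routine perturbation of the global one, because it changes which hypotheses carry the load. Consider $\mu_0=\frac12\Leb^2$ on $[0,1]\times[0,2]$ and
\begin{equation*}
T(x,y)=\begin{cases} \big(x,\,y-2x\big) & \text{if } 0<y<1,\\[2pt] \big(x-\tfrac12,\,y-2x\big) & \text{if } 1<y<2. \end{cases}
\end{equation*}
Here $T_1$ is locally constant in $y$ and increasing in $x$, and $T_2(x,\cdot)$ is increasing on each whole fiber, yet $T$ is two-to-one on a set of positive measure, and there the Jacobi equation fails: $J_T\equiv 1$ while the image density is twice what \eqref{eq:jacobi} predicts. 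So essential injectivity of $T$ is an indispensable hypothesis in the genuinely local setting --- but your argument never invokes it (in the global triangular case essential injectivity is automatic, which is why you never missed it). The repair is to run your two-step argument on a countable cover by rectangles $Q=I\times J$ on which $T_1$ depends only on $x$; there the fiber maps transport the conditionals of $\mu_0|_Q$ onto the conditionals of $T_\#(\mu_0|_Q)$, and to relate the latter to the global density $\tilde\rho_1$ one must show that $T_\#(\mu_0|_Q)$ coincides with the restriction of $\mu_1$ to $T(Q\cap E)$, where $E$ is the injectivity set. That identification is exactly where injectivity enters, and it is the real content of the proposition, not a routine afterthought.
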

\noindent This proposition shows in particular that the Jacobi equation can be properly adapted to our setting.

Using Jacobi equation we can deduce the following criterion to prove convexity of the entropy, by looking at local quantities instead of global ones. 

\begin{prop} \label{prop:jacobi}
Let $\mu_0,\mu_1\in \Prob_{ac}(X,\m)$ and $T:X \to X$ be an optimal transport map between $\mu_0$ and $\mu_1$, in particular $T_\# \mu_0 = \mu_1$. Consider a midpoint $\mu_{1/2}\in \Prob_{ac}(X,\m)$ of $\mu_0$ and $\mu_1$, assume that $\mu_{1/2}=[M\circ(\id,T)]_\# \mu_0$ where the map $M:X\times X \to X$ is a (measurable) midpoint selection. Suppose also that the maps $T$ and $M\circ(\id,T):X \to X$ are injective outside a $\mu_0$-null set and they satisfy the Jacobi equation \eqref{eq:jacobi}, with suitable Jacobian functions $J_T$ and $J_{M\circ(\id,T)}$. If 
\begin{equation*}
    \left( m\big(M((x,y),T(x,y))\big) J_{M\circ(\id,T)}(x,y) \right)^\frac 1N \geq \frac12  \left( m(T(x,y)) J_T(x,y) \right)^\frac 1N + \frac12 (m(x,y))^\frac 1N
\end{equation*}
for $\mu_0$-almost every $(x,y)$, then
\begin{equation}\label{eq:entconvexity}
    S_N(\mu_{1/2}) \leq \frac12 S_N(\mu_0) + \frac12 S_N(\mu_1).
\end{equation}
\end{prop}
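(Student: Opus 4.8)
The plan is to reduce the global inequality \eqref{eq:entconvexity} to the pointwise hypothesis by writing all three R\'enyi entropies as integrals against the single measure $\mu_0$ and then substituting the Jacobi equation. Write $\mu_0=\rho_0\m$, $\mu_1=\rho_1\m$ and $\mu_{1/2}=\rho_{1/2}\m$, and set $S:=M\circ(\id,T)$, so that $\mu_1=T_\#\mu_0$ and $\mu_{1/2}=S_\#\mu_0$. For any $\mu=\rho\m\in\Prob_{ac}(X,\m)$ one has $S_N(\mu)=-\int\rho^{1-1/N}\de\m=-\int\rho^{-1/N}\de\mu$; pushing the last integral forward along the relevant maps gives
\[
S_N(\mu_0)=-\int\rho_0(x,y)^{-1/N}\de\mu_0, \qquad S_N(\mu_1)=-\int\rho_1(T(x,y))^{-1/N}\de\mu_0,
\]
\[
S_N(\mu_{1/2})=-\int\rho_{1/2}(S(x,y))^{-1/N}\de\mu_0 .
\]
Thus \eqref{eq:entconvexity} is equivalent (after multiplying by $-1$) to
\[
\int\rho_{1/2}(S)^{-1/N}\de\mu_0 \;\geq\; \frac12\int\rho_0^{-1/N}\de\mu_0+\frac12\int\rho_1(T)^{-1/N}\de\mu_0 .
\]

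Next I would bring in the Jacobi equation, taking care that \eqref{eq:jacobi} is stated for Lebesgue densities. Since $\mu_0=(\rho_0\,m)\,\Leb^2$ and $\mu_1=(\rho_1\,m)\,\Leb^2$, the equation for $T$ reads $\rho_1(T)\,m(T)\,J_T=\rho_0\,m$, and analogously $\rho_{1/2}(S)\,m(S)\,J_S=\rho_0\,m$ with $J_S=J_{M\circ(\id,T)}$. The weight $m$ is strictly positive on $X$, and $\rho_0>0$ holds $\mu_0$-a.e.\ (because $\mu_0(\{\rho_0=0\})=0$); hence these identities force $J_T,J_S>0$ and $\rho_1(T),\rho_{1/2}(S)>0$ for $\mu_0$-a.e.\ $(x,y)$, so every expression below is well defined. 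Solving for the densities at the image points,
\[
\rho_1(T)^{-1/N}=\rho_0^{-1/N}\left(\frac{m(T)\,J_T}{m}\right)^{1/N}, \qquad \rho_{1/2}(S)^{-1/N}=\rho_0^{-1/N}\left(\frac{m(S)\,J_S}{m}\right)^{1/N}.
\]

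It then suffices to prove the displayed integral inequality integrand-wise, i.e.\ $\rho_{1/2}(S)^{-1/N}\geq\frac12\rho_0^{-1/N}+\frac12\rho_1(T)^{-1/N}$ for $\mu_0$-a.e.\ $(x,y)$. Substituting the two identities above, factoring out $\rho_0^{-1/N}>0$, and multiplying through by $m(x,y)^{1/N}$, this reduces to
\[
\big(m(S(x,y))\,J_S(x,y)\big)^{1/N}\geq\frac12\,m(x,y)^{1/N}+\frac12\big(m(T(x,y))\,J_T(x,y)\big)^{1/N},
\]
which is exactly the hypothesis. Integrating against $\mu_0$ yields the claim. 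The computation is essentially bookkeeping, and the one place demanding genuine care is the distinction between densities with respect to $\m$ and with respect to $\Leb^2$: because $T$ and $S$ satisfy \eqref{eq:jacobi} in its Lebesgue-density form, the weight $m$ must be carried through at both the source and the image points, and it is precisely this that matches the $m$-factors appearing in the hypothesis. The remaining points to check are the validity of the pushforward change of variables and the $\mu_0$-a.e.\ positivity of $\rho_0$, $J_T$, $J_S$, which together justify passing to the pointwise inequality.
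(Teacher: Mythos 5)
Your proof is correct and takes essentially the same route as the paper's: both reduce \eqref{eq:entconvexity} to the pointwise inequality $\rho_{1/2}(M((x,y),T(x,y)))^{-1/N}\geq\frac12\rho_1(T(x,y))^{-1/N}+\frac12\rho_0(x,y)^{-1/N}$ by rewriting each entropy as an integral against $\mu_0$, and then use the Lebesgue-density form of the Jacobi equation, with the weight $m$ carried through at source and image points, to show this pointwise inequality is equivalent to the hypothesis. You merely spell out two details the paper leaves implicit, namely the pushforward change of variables and the $\mu_0$-a.e.\ positivity of $\rho_0$, $J_T$, $J_{M\circ(\id,T)}$.
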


\begin{pr}

Set $\mu_0=\rho_0 \mathfrak{m}= \tilde{\rho}_0 \Leb^2$, $\mu_1=\rho_1 \mathfrak{m}= \tilde{\rho}_1 \Leb^2$ and $\mu_{1/2}=\rho_{1/2} \mathfrak{m}= \tilde{\rho}_{1/2} \Leb^2$, observe that, in order to prove \eqref{eq:entconvexity}, it is sufficient to prove that 
\begin{equation}\label{eq:sufcond}
     \rho_{1/2}\big(M((x,y),T(x,y))\big)^{-\frac 1N} \geq \frac12  \rho_{1}(T(x,y)) ^{-\frac 1N}  + \frac12 \rho_0(x,y)^{-\frac 1N} ,
\end{equation}
for $\mu_0$-almost every $(x,y)$.
On the other hand, our assumption on the validity of Jacobi equation for $T$ ensures that 
\begin{equation*}
    \tilde{\rho}_1(T(x,y)) J_T(x,y) = \tilde{\rho}_0(x,y),
\end{equation*}
for $\mu_0$-almost every $(x,y)$, and thus that 
\begin{equation*}
    \rho_1(T(x,y)) m(T(x,y)) J_T(x,y) = \rho_0(x,y) m(x,y).
\end{equation*}
for $\mu_0$-almost every $(x,y)$.
Analogously, since the Jacobi equation holds for $M\circ(\id,T)$, we can deduce that
\begin{equation*}
    \rho_{1/2}\big(M((x,y),T(x,y))\big) m\big(M((x,y),T(x,y))\big) J_{M\circ(\id,T)}(x,y) = \rho_0(x,y) m(x,y),
\end{equation*}
for $\mu_0$-almost every $(x,y)$.
Therefore, \eqref{eq:sufcond} is equivalent to 
\begin{equation*}
     \left( \frac{\rho_0(x,y) m(x,y)}{ m\big(M((x,y),T(x,y))\big) J_{M\circ(\id,T)}(x,y)}\right)^{-\frac 1N} \geq 
    \frac12  \left( \frac{\rho_0(x,y) m(x,y)}{m(T(x,y)) J_T(x,y)} \right)^{-\frac 1N} + \frac12 (\rho_0(x,y))^{-\frac 1N}.
\end{equation*}
Some easy rearrangements show that this last equation is equivalent to
\begin{equation*}
    \left( m\big(M((x,y),T(x,y))\big) J_{M\circ(\id,T)}(x,y) \right)^\frac 1N \geq \frac12  \left( m(T(x,y)) J_T(x,y) \right)^\frac 1N + \frac12 (m(x,y))^\frac 1N,
\end{equation*}
concluding the proof.
\end{pr}

\noindent In the proof of the main result (Theorem \ref{thm:CDTheExample}) we use Proposition \ref{prop:jacobi}, together with Proposition \ref{prop:CDinmidpoint}, to prove the $\cd(0,N)$ condition. This will is possible because the maps $T$ and $M\circ(\id,T)$, that we are going to consider, satisfy the assumptions of Proposition \ref{prop:rajalajacobi}.

\subsection{Definition of the Midpoint}\label{sec:defimidpoint}

According to Proposition \ref{prop:CDinmidpoint}, in order to prove $\cd(0,N)$ condition, it is sufficient to show entropy convexity in a suitable midpoint of any pair of absolutely continuous measures. Observe that in highly branching metric measure spaces, like $(X,\di,\m)=(X_f,\di_\infty, \m_{f,K})$, the choice of a midpoint can be done with great freedom. This is because, in general, both the optimal transport map and the geodesic interpolation are not unique. In this section, we present the midpoint selection used in \cite{MR4402722}.
In particular, for any pair $\mu_0,\mu_1 \in \Prob_{ac}(X,\m)$, we select a suitable optimal transport map $T$ between them and then we identify a Wasserstein midpoint with a suitable midpoint interpolation map $M$. 
In order to define both the optimal transport map $T$ and the midpoint interpolation map $M$, we introduce the sets $V,D,H,H_0,H_\frac12,H_1 \subset \R^2 \times \R^2$ as:
\begin{equation*}
    V:=\left\{\left(\left(x_{0}, y_{0}\right),\left(x_{1}, y_{1}\right)\right) \in \mathbb{R}^{2} \times \mathbb{R}^{2}:\left|x_{0}-x_{1}\right|<\left|y_{0}-y_{1}\right|\right\},
\end{equation*}
\begin{equation*}
    D:=\left\{\left(\left(x_{0}, y_{0}\right),\left(x_{1}, y_{1}\right)\right) \in \mathbb{R}^{2} \times \mathbb{R}^{2}:\left|x_{0}-x_{1}\right|=\left|y_{0}-y_{1}\right|\right\},
\end{equation*}
\begin{equation*}
    H:=\left\{\left(\left(x_{0}, y_{0}\right),\left(x_{1}, y_{1}\right)\right) \in \mathbb{R}^{2} \times \mathbb{R}^{2}:\left|x_{0}-x_{1}\right|>\left|y_{0}-y_{1}\right|\right\}= H_0  \cup H_1,
\end{equation*}
where
\begin{equation*}
    H_0:=\left\{\left(\left(x_{0}, y_{0}\right),\left(x_{1}, y_{1}\right)\right) \in \mathbb{R}^{2} \times \mathbb{R}^{2}:\frac12\left|x_{0}-x_{1}\right|\geq\left|y_{0}-y_{1}\right|\right\},
\end{equation*}
\begin{equation*}
    H_1:=\left\{\left(\left(x_{0}, y_{0}\right),\left(x_{1}, y_{1}\right)\right) \in \mathbb{R}^{2} \times \mathbb{R}^{2}:\left|x_{0}-x_{1}\right|>\left|y_{0}-y_{1}\right| >\frac12 |x_0-x_1|\right\}.
\end{equation*}

First of all we present our optimal transport map selection, which follows the work of Rajala \cite{rajala2013failure}. In particular, given two absolutely continuous measures $\mu_0,\mu_1 \in \Prob(\R^2)$, he was able to select, via consecutive minimizations, an optimal transport map with different nice properties, which are summarized in the following statement.

\begin{prop}\label{prop:map}
Given two measures $\mu_0,\mu_1 \in \Prob(\R^2)$ which are absolutely continuous with respect to the Lebesgue measure $\Leb^2$, there exists a measurable optimal transport map $T=(T_1,T_2)$ between $\mu_0$ and $\mu_1$, injective outside a $\mu_0$-null set, with the following properties. For $\mu_0$-almost every $(x,y)$, we have that
\begin{equation*}
    \begin{array}{l}T_{1} \text { is locally constant in } y, \text { if }((x, y), T(x, y)) \in H \text { and } \\ T_{2} \text { is locally constant in } x, \text { if }((x, y), T(x, y)) \in V\end{array}.
\end{equation*}
Moreover, the function $T_1(x,y)$ is increasing in $x$ for every fixed $y$ and the function $T_2(x,y)$ is increasing in $y$ for every fixed $x$, therefore for $\mu_0$-almost every $(x,y)$ it holds
\begin{equation*}
    \frac{\partial T_{1}}{\partial x} \geq 0 \text { and } \frac{\partial T_{2}}{\partial y} \geq 0, \text { if }((x, y), T(x, y)) \in H \cup V.
\end{equation*}
\end{prop}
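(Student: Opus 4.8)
The plan is to construct the map by a \emph{consecutive minimization} scheme, following Rajala \cite{rajala2013failure}, exploiting the strong degeneracy of the cost $c((x_0,y_0),(x_1,y_1)) = \di_\infty^2((x_0,y_0),(x_1,y_1)) = \max\{|x_0-x_1|,|y_0-y_1|\}^2$. First I would recall the standard facts of optimal transport: since $\mu_0,\mu_1 \in \Prob(\R^2)$ are absolutely continuous (in particular with finite second moment), the Kantorovich problem for $c$ admits a minimizer, and the support of every optimal plan is $c$-cyclically monotone. The essential structural observation is that $c$ \emph{decouples} on the two regions: on $H$ one has $c = |x_0-x_1|^2$, which is blind to the $y$-coordinates, whereas on $V$ one has $c = |y_0-y_1|^2$, blind to the $x$-coordinates. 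Consequently the primary cost alone leaves a large amount of freedom — it neither determines a map nor forces any monotonicity — so the whole difficulty is to select, among the optimal plans, a distinguished one enjoying the asserted properties.

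To break this degeneracy I would perform two successive secondary minimizations. Let $\Pi_0$ be the (nonempty, weakly compact) set of plans minimizing $\int c \,\de\pi$. Within $\Pi_0$ minimize the auxiliary functional $\pi \mapsto \int |x_0-x_1|^2 \de\pi$, obtaining a subclass $\Pi_1 \subseteq \Pi_0$; within $\Pi_1$ minimize $\pi \mapsto \int |y_0-y_1|^2 \de\pi$, obtaining $\Pi_2 \subseteq \Pi_1$, and fix $\pi \in \Pi_2$ (each step has a minimizer by tightness of the competitors and lower semicontinuity of the functionals). The point of this lexicographic choice is that the secondary $x$-cost is convex in the $x$-variables and hence, on the portion of mass where the primary cost does not already fix the $x$-transport (namely on $V$), it forces the monotone rearrangement in $x$; symmetrically the tertiary $y$-cost forces the monotone rearrangement in $y$ on the remaining freedom (on $H$). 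The minimizing sets $\Pi_1,\Pi_2$ are again characterized by cyclical monotonicity with respect to the appropriate cost, and it is from these monotonicity relations that all the coordinatewise properties are read off.

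From here I would derive the three claims. The separate monotonicity of the $x$- and $y$-transports, obtained from cyclical monotonicity for the secondary and tertiary costs, gives that $T_1$ is nondecreasing in $x$ at fixed $y$ and $T_2$ nondecreasing in $y$ at fixed $x$; combined with the absolute continuity of $\mu_0$, it gives that $\pi$ is concentrated on the graph of a map $T=(T_1,T_2)$, injective off a $\mu_0$-null set (the standard argument that a coordinatewise-monotone plan between absolutely continuous marginals cannot split mass, applied slicewise). The local constancy is the reflection of the decoupling: on $H$ both $\int c\,\de\pi$ and the secondary $x$-cost depend only on the $x$-data, so the destination $T_1(x,y)$ cannot depend on $y$ locally, i.e. $T_1$ is locally constant in $y$ there; the symmetric statement on $V$ yields that $T_2$ is locally constant in $x$. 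Finally, $\frac{\partial T_1}{\partial x}\ge 0$ and $\frac{\partial T_2}{\partial y}\ge 0$ on $H\cup V$ follow by differentiating the (a.e.\ differentiable) monotone functions $T_1(\cdot,y)$ and $T_2(x,\cdot)$.

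The step I expect to be the main obstacle is precisely reconciling the global monotonicity with the local-constancy constraints while simultaneously guaranteeing that the selected plan is induced by a \emph{map}: the degeneracy of the $\di_\infty^2$ cost means that cyclical monotonicity of a single optimal plan is far too weak to yield single-valuedness, and one must check that the two secondary minimizations are compatible — that imposing the monotone $x$-rearrangement does not obstruct the subsequent monotone $y$-rearrangement — and that together they exhaust the freedom left by the primary problem. Verifying that the resulting $T$ is genuinely single-valued $\mu_0$-almost everywhere, and that the monotonicity survives the passage across the regions $H$, $V$ and the diagonal $D$, is the delicate part; here the absolute continuity of both marginals is used crucially to rule out the branching of mass that the $l_\infty$ geometry would otherwise permit. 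For the full execution of this scheme I would refer to \cite{rajala2013failure}.
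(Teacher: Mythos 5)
Your proposal takes essentially the same route as the paper: the paper offers no independent proof of this proposition, presenting it as a summary of Rajala's consecutive-minimization selection from \cite{rajala2013failure}, and your lexicographic scheme (primary $\di_\infty^2$ cost, then the $x$- and $y$-displacement costs, with monotonicity and a.e.\ injectivity extracted from the resulting swap/cyclical-monotonicity relations) is exactly that construction, with the same final deferral to \cite{rajala2013failure}. Two minor imprecisions, neither fatal: absolute continuity does not imply finite second moment (harmless, since in the application the measures are compactly supported), and on $H$ the local constancy of $T_1$ in $y$ does not follow from the cost being blind to $y$ (blindness only permits freedom, it forbids nothing) but rather from uniqueness of the monotone one-dimensional quadratic-cost transport between the projected absolutely continuous marginals, which is how the cited construction actually obtains it.
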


Now fix two measures $\mu_0,\mu_1\in \Prob_{ac}(X,\m)$, observe that, since they are absolutely continuous with respect to the reference measure $\m$, they are absolutely continuous also with respect to the Lebesgue measure $\Leb^2$. Call $T$ the optimal transport map between $\mu_0$ and $\mu_1$, identified by Proposition \ref{prop:map}. In order to identify a midpoint of $\mu_0$ and $\mu_1$, we need to select a proper midpoint interpolation map, i.e. a measurable map $M:X\times X\to X$ such that 
\begin{equation*}
    \di_\infty(M(z,w),z) = \di_\infty(M(z,w),w)= \frac12 \di_\infty(z,w) \quad \text{for every }(z,w)\in X \times X ,
\end{equation*}
the desired midpoint will be $M_\# \big( (\id,T)_\# \mu_0 \big)= [M\circ (\id,T)]_\# \mu_0$.

The midpoint interpolation map $M$ that we are going to use in the following is defined in different ways on the sets $V$, $D$, $H_0$ and $H_1$. In particular, the precise definition is the following:
\begin{itemize}
    \item If $\big((x_0,y_0), (x_1,y_1)\big)\in V\cup D$
    \begin{equation*}
        M\big((x_0,y_0), (x_1,y_1)\big):=\left(\frac{x_0+x_1}{2}, \frac{y_0+y_1}{2} \right)
    \end{equation*}
    \item If $\big((x_0,y_0), (x_1,y_1)\big)\in H_0 $,
    \begin{equation*}
    M\big((x_0,y_0), (x_1,y_1)\big)= \left(\frac{x_0+x_1}{2} , \frac12 \bigg( \frac{y_0}{f(x_0)}  +  \frac{y_1}{f(x_1)}  \bigg)f\left(\frac{x_0+x_1}{2}\right) \right).\end{equation*}
    \item If $\big((x_0,y_0), (x_1,y_1)\big)\in H_1$, with $x_0<x_1$ and $y_0<y_1$, introduce the quantity
    \begin{equation*}
        \tilde{y}(x_0,x_1,y_0)= \frac12\bigg( \frac{y_0}{f(x)}  +  \frac{y_0+\frac{x_1-x_0}{2}}{f(x_1)}  \bigg)f\left(\frac{x_1+x_0}{2}\right)  -y_0,
    \end{equation*}
    and consequently define 
    \begin{align*}
        M&\big((x_0,y_0), (x_1,y_1)\big)\\
        & \qquad\qquad= \left(\frac{x_0+x_1}{2} ,y_0 + \tilde{y}(x_0,x_1,y_0) + \left(\frac{x_1-x_0}{2} - \tilde{y}(x_0,x_1,y_0) \right) \left( 2 \frac{y_1 - y_0}{x_1-x_0} -1 \right)\right).
    \end{align*}
    In the other cases where $\big((x_0,y_0), (x_1,y_1)\big)\in H_1$, $M$ can be defined analogously, every proof from now on will be done only taking care of this case, implying it can be easily adapted to the other cases.
\end{itemize}

The next statement, which combines Proposition 5.3 and Proposition 5.4 in \cite{MR4402722}, ensures that $M$ actually provides a midpoint selection and interacts well with the selected optimal transport map $T$.

\begin{prop}\label{prop:aeinjectivity}
For $k$ sufficiently small, the map $M$ is a midpoint interpolation map and the map $M\circ(\id,T)$ is injective outside a $\mu_0$-null set.
\end{prop}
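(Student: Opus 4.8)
The plan is to establish the two assertions separately, since they are of a rather different nature: the midpoint property is a purely metric, configuration-by-configuration check, whereas the essential injectivity of $M\circ(\id,T)$ couples the explicit form of $M$ with the monotonicity structure of the optimal map $T$ supplied by Proposition \ref{prop:map}. Throughout I fix $z=(x_0,y_0)$, $w=(x_1,y_1)\in X$, write $\bar x=\tfrac{x_0+x_1}{2}$ and $d=\di_\infty(z,w)$, and note that the first coordinate of $M(z,w)$ is always $\bar x$, so every estimate reduces to controlling the $y$-coordinate $M_y$.

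For the midpoint property I would argue on $V\cup D$, on $H_0$, and on $H_1$. On $V\cup D$ one has $d=|y_0-y_1|\ge|x_0-x_1|$, so the two $\di_\infty$-balls of radius $d/2$ centred at $z$ and $w$ force $M_y=\tfrac{y_0+y_1}{2}$ and leave the $x$-coordinate free in an interval centred at $\bar x$; the Euclidean choice is therefore automatically equidistant and only containment $M_y\le f(\bar x)$ must be checked. The key point is that $|x_0-x_1|\le|y_0-y_1|$ together with $|f'|\le k<1$ prevents both endpoints from lying near the top boundary: writing $y_i=f(x_i)-a_i$ with $a_i\ge0$ gives $a_0+a_1\ge|a_0-a_1|>(1-k)|x_0-x_1|\ge\tfrac34|x_0-x_1|$, and this dominates the convexity defect $\tfrac12 f(x_0)+\tfrac12 f(x_1)-f(\bar x)\le\tfrac18(x_0-x_1)^2$ produced by $|f''|\le1$, whence $M_y<f(\bar x)$.

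On $H_0$ and $H_1$ one has $d=|x_0-x_1|$, so it remains to prove $|M_y-y_i|\le d/2$. On $H_0$ containment is automatic because $M_y=\tfrac{s_0+s_1}{2}f(\bar x)$ with $s_i:=y_i/f(x_i)\in[0,1]$ lies between $0$ and $f(\bar x)$; setting $e_i:=f(x_i)-f(\bar x)$, so $|e_i|\le kd/2$, I would use the identity $M_y-y_0=\tfrac{y_1-y_0}{2}-\tfrac{s_0e_0+s_1e_1}{2}$, which with $|y_0-y_1|\le d/2$ gives $|M_y-y_0|\le(\tfrac14+\tfrac{k}{2})d<\tfrac d2$, and symmetrically at $y_1$. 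The case $H_1$ is the delicate one: writing $\lambda:=2\tfrac{y_1-y_0}{x_1-x_0}-1\in(0,1)$, the definition gives $M_y-y_0=(1-\lambda)\tilde y+\tfrac d2\lambda$, so both the distance bound and containment reduce to showing $\tilde y\in[0,d/2]$ and then exploiting that $M_y$ is affine and increasing in $\lambda$. The sharper bound $\tilde y\in[\tfrac d8,\tfrac{3d}{8}]$ follows from the same $H_0$-identity applied to the virtual endpoint $y_0+\tfrac d2$, admissible because $y_0+\tfrac d2\le y_1\le f(x_1)$ in $H_1$; containment $M_y\le f(\bar x)$ is then obtained by comparing the actual value $M_y\approx y_0+\tfrac d4(1+\lambda)$ with $f(\bar x)\ge f(x_1)-kd/2\ge y_0+\tfrac d2(1+\lambda)-\tfrac{kd}{2}$, the inequality holding for every $\lambda\ge0$ once $k<\tfrac14$. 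All these estimates are uniform over $\mathscr F_k$, which is the reason for phrasing the hypotheses through \eqref{eq:defFk}.

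For the injectivity of $M\circ(\id,T)$ I would partition the domain (up to a $\mu_0$-null set) according to whether $((x,y),T(x,y))$ lies in $V$, $D$, $H_0$ or $H_1$, and apply Proposition \ref{prop:map} on each piece. On $H$ the first coordinate of $M\circ(\id,T)$ equals $\tfrac{x+T_1(x,y)}{2}$, which is locally independent of $y$ and strictly increasing in $x$, so fixing it pins $x$; along the resulting fibre the second coordinate is monotone in $y$, giving injectivity there. On $V$ the roles of the two coordinates are exchanged, using that $T_2$ is locally independent of $x$ and increasing in $y$ and that $M$ is the Euclidean midpoint. The remaining work is to glue these pieces and treat the interfaces, which is precisely the content transferred from Proposition~5.4 of \cite{MR4402722}; crucially, injectivity involves only $T$ and the metric map $M$, not $\m$ or $N$, so that argument carries over once the midpoint estimates above are known for the enlarged class. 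I expect the main obstacles to be the bookkeeping in the $H_1$ estimates and the verification that the region-by-region injectivity survives gluing across $D$ and the boundary of $H_0$; the measure-theoretic and dimensional features of the problem play no role at this stage.
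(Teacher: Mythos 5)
First, note what you are being compared against: the paper does not prove Proposition \ref{prop:aeinjectivity} at all. It imports it from \cite{MR4402722} (Propositions 5.3 and 5.4 there), adding only the remark that those proofs use nothing about $f$ beyond the defining properties \eqref{eq:defFk} of $\mathscr F_k$. Your treatment of the midpoint half is therefore a genuinely different route, and it is correct. On $V\cup D$ the equidistance is indeed automatic for the Euclidean midpoint, and containment follows from comparing the convexity defect $\tfrac12 f(x_0)+\tfrac12 f(x_1)-f(\bar x)\le\tfrac18(x_0-x_1)^2$ (from $|f''|\le 1$) with your boundary gap $\tfrac{a_0+a_1}{2}\ge\tfrac{1-k}{2}\,|x_0-x_1|$ (from $|f'|\le k$ and $|y_0-y_1|\ge|x_0-x_1|$), using $|x_0-x_1|\le 2$. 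Your identity $M_y-y_0=\tfrac{y_1-y_0}{2}-\tfrac{s_0e_0+s_1e_1}{2}$ on $H_0$ is exact and gives $|M_y-y_i|\le(\tfrac14+\tfrac k2)d$, and the reduction of $H_1$ to the $H_0$ estimate through the virtual endpoint $(x_1,y_0+\tfrac d2)$, admissible since $y_0+\tfrac d2<y_1\le f(x_1)$ on $H_1$, correctly yields $\tilde y\in[\tfrac d8,\tfrac{3d}{8}]$ for $k\le\tfrac14$, from which equidistance and containment follow as you indicate. This is exactly the verification whose existence the paper's remark asserts but does not display: it makes visible that only \eqref{eq:defFk} and $k<\tfrac14$ enter, uniformly over $\mathscr F_k$, which is what Corollary \ref{cor:main} ultimately needs.

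The injectivity half, however, has a genuine gap if read as a proof rather than as a pointer to \cite{MR4402722}. The step ``the first coordinate $\tfrac{x+T_1(x,y)}{2}$ is locally independent of $y$ and increasing in $x$, so fixing it pins $x$'' is not valid: Proposition \ref{prop:map} gives local constancy of $T_1$ in $y$ and monotonicity in $x$ along each horizontal fibre, which pins $x$ only among points sharing the same $y$ (or locally). Two points $(x,y)$ and $(x',y')$ with $x\ne x'$ and $y\ne y'$ can perfectly well satisfy $x+T_1(x,y)=x'+T_1(x',y')$, and then injectivity must come from the second coordinate, whose monotonicity you only control at fixed $x$. So what you defer to Proposition 5.4 of \cite{MR4402722} is not merely ``gluing across $D$ and the boundary of $H_0$''; it is the core of the within-region argument as well. (A further small point: on $H_1$ the monotonicity in $y$ of the second coordinate of $M\circ(\id,T)$ is itself not free — it uses the smallness of $\partial\tilde y/\partial y$ and of $\tilde y-\tfrac{T_1-x}{4}$, i.e. $k$ small.) Since the paper resolves this half purely by citation, your proposal is not weaker than the paper here, but you should present that half honestly as a citation rather than as a sketch whose only missing piece is interface bookkeeping.
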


\begin{remark}
    The proof Proposition 5.3 and Proposition 5.4 in \cite{MR4402722} is done for a quite specific choice of the function $f$ which is defining the profile of $X=X_f$. However the proof only requires $f$ to satisfy the properties which define the set $\mathscr{F}_k$ (see \eqref{eq:defFk}), therefore the statement is true for every $f\in \mathscr{F}_k$.
\end{remark}

\section{Proof of CD Condition}\label{section:CDproof}

In this section we prove the main result of this work, showing the validity of the $\CD(0,N)$ condition for metric measure spaces of the type $(X_f,\di_\infty, \m_{f,K})$, with $f\in \mathscr{F}_k$. The proof follows the strategy developed in \cite{MR4402722}, refining it in order to obtain the $\cd$ condition with finite dimensional parameter.

\begin{theorem}\label{thm:CDTheExample}
For $k$ sufficiently small and $K$ sufficiently large, there exists $N>1$ such that for every $f\in \mathscr{F}_k$ the metric measure space $(X_f,\di_\infty, \m_{f,K})$ is a $\CD(0,N)$ space.
\end{theorem}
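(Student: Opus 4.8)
The plan is to invoke Proposition~\ref{prop:CDinmidpoint}, so that it suffices to exhibit, for an arbitrary pair $\mu_0,\mu_1\in\Prob_{ac}(X,\m)$, a single midpoint $\mu_{1/2}$ satisfying the midpoint convexity inequality for the entropy $S_{N'}$, for all $N'\ge N$. I would take $T$ to be the optimal transport map furnished by Proposition~\ref{prop:map} and $M$ to be the midpoint interpolation map defined in Section~\ref{sec:defimidpoint}, setting $\mu_{1/2}=[M\circ(\id,T)]_\#\mu_0$. By Proposition~\ref{prop:aeinjectivity} this is a genuine midpoint and $M\circ(\id,T)$ is injective off a $\mu_0$-null set; since both $T$ and $M\circ(\id,T)$ respect the coordinate monotonicity hypotheses of Proposition~\ref{prop:rajalajacobi} (for the relevant cases in $V$, $D$, $H_0$, $H_1$), the Jacobi equation holds for each with explicit Jacobians. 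Proposition~\ref{prop:jacobi} then reduces the entire entropy convexity to the pointwise inequality
\begin{equation*}
    \left( m\big(M((x,y),T(x,y))\big)\,J_{M\circ(\id,T)}(x,y)\right)^{\frac1N}\ge \tfrac12\left(m(T(x,y))\,J_T(x,y)\right)^{\frac1N}+\tfrac12\,(m(x,y))^{\frac1N},
\end{equation*}
for $\mu_0$-a.e.\ $(x,y)$, which I would verify for a fixed choice of $k$, $K$, $N$.

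The heart of the argument is a local computation of the two quantities $m(T(x,y))J_T$ and $m(M(\cdot))J_{M\circ(\id,T)}$ along a single transport ray, reducing the midpoint inequality to a one-dimensional $(0,N)$-convexity statement via the characterization~\eqref{eq:0Nconv=concavity}. Recall $m_{f,K}(x,y)=\frac1{f(x)}\exp\!\big(-K(y/f(x))^2\big)$, so taking logarithms splits the density into an $x$-dependent piece $-\log f(x)$ and a Gaussian-in-$y/f(x)$ piece. The strategy is to write the candidate interpolated density as $e^{-g(t)}$ where $t$ parametrizes the geodesic, and to show $g$ is $(0,N)$-convex by decomposing it as a sum of simpler convex contributions and applying the additivity Lemma~\ref{lem:sumKN}: the Jacobian factors $\partial T_1/\partial x$ and $\partial T_2/\partial y$ contribute $-\log$ terms that are each $(0,1)$-convex by Example~\ref{ex:KNconv}, the measure weight $1/f$ contributes a term controlled by $|f''|\le 1$, and the Gaussian weight contributes a quadratic-in-$y/f$ term that is $(0,2K)$-convex again by Example~\ref{ex:KNconv}. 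The delicate point is that the geodesic midpoint of the $y$-coordinate is not the naive average but is rescaled through $f$ at the various $x$-values (as visible in the definitions of $M$ on $H_0$ and $H_1$), so the interpolation of $y/f$ is only approximately linear; this error must be absorbed.

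This is exactly where Lemma~\ref{lem:approxconv} enters, and I expect controlling it to be the main obstacle. The deviation of the true midpoint from the affine interpolation, measured through $f$, produces a discrepancy $\delta$ of the type appearing in that lemma, and the price for the nonlinearity is a term that is only $(-2^{21}\delta^2,2)$-convex rather than genuinely $(0,\cdot)$-convex. The key estimate will be to bound $\delta$ in terms of $k$ (using $|f'|\le k$ and $|f''|\le 1$), so that the negative curvature deficit $2^{21}\delta^2$ is dominated by the strictly positive curvature surplus coming from the Gaussian $(0,2K)$-convex term once $K$ is taken large and $k$ small; the additivity Lemma~\ref{lem:sumKN} must then yield a net $(0,N)$-convex function for a finite $N$ depending only on $K$ and the fixed dimensional contributions. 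I would handle the cases $V\cup D$, $H_0$, and $H_1$ separately, since the geometry of the midpoint selection differs, but in each case the scheme is the same: expand the logarithm of the interpolated density, identify the clean $(0,N_i)$-convex summands, bound the residual by Lemma~\ref{lem:approxconv}, and choose $k$ small and $K$ large so the surplus beats the deficit, finally reading off a finite admissible $N$.
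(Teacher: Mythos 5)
Your overall architecture coincides with the paper's: reduce to midpoints via Proposition \ref{prop:CDinmidpoint}, take the transport map $T$ of Proposition \ref{prop:map} and the midpoint selection $M$ of Section \ref{sec:defimidpoint}, invoke Propositions \ref{prop:aeinjectivity} and \ref{prop:rajalajacobi} to validate the Jacobi equation, and reduce everything to the pointwise inequality of Proposition \ref{prop:jacobi}, checked case by case on $V\cup D$, $H_0$, $H_1$ by writing the relevant quantities as $e^{-g(t)}$ and proving $(0,N)$-convexity of $g$ through Lemma \ref{lem:sumKN}. Your treatment of $H_0$ is essentially the paper's (note, though, that no approximation lemma is needed there: on $H_0$ the $y/f$-coordinate of the midpoint is \emph{exactly} the average of those of the endpoints, so Example \ref{ex:KNconv}, Remark \ref{rmk:repar} and Lemma \ref{lem:sumKN} give $(0,2K+2)$-convexity directly; Lemma \ref{lem:approxconv} enters only on $H_1$).

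The genuine gap is in your compensation mechanism for $H_1$. You propose to absorb the deficit $(-2^{21}\delta^2,2)$ produced by Lemma \ref{lem:approxconv} using ``the strictly positive curvature surplus coming from the Gaussian $(0,2K)$-convex term''. But a $(0,2K)$-convex function carries no surplus: by Lemma \ref{lem:sumKN}, adding it to a $(-c,2)$-convex function yields only $(-c,2K+2)$-convexity, which is never $(0,N')$-convexity for any $N'$, no matter how large $K$ or how small $k$ is. The missing ingredient is Lemma \ref{lem:est3}: along curves $x\mapsto(x,y(x))$ with slope $y'\geq\frac14$ --- which is precisely the situation on $V$ and $H_1$, and not on $H_0$ --- the function $-\log m(x,y(x))$ is $\big(\frac{K}{32 f_I^2},32K\big)$-convex, i.e.\ the Gaussian weight produces a strictly positive modulus only when coupled with a lower bound on the steepness of the ray; your sketch never identifies this lemma or the role of the slope bound. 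Relatedly, your plan to ``bound $\delta$ in terms of $k$'' has the wrong scaling: a bound $\delta\lesssim k$ makes the deficit of uniform size $\sim k^2$, while the reparametrized surplus $\frac{K}{32 f_I^2}(T_1-x)^2$ from Lemma \ref{lem:est3} and Remark \ref{rmk:repar} vanishes as the ray length $T_1-x\to0$, so no choice of constants could close the argument uniformly over transport rays. The paper instead bounds the deviation of the midpoint by $\frac{1}{2^{11}}\frac{(T_1-x)/2}{f_I}$ (for $k$ small, using \eqref{eq:stimafI}), so that deficit and surplus share the same homogeneity $\big[\frac{T_1-x}{f_I}\big]^2$; then $K\geq16$ (so that $\frac{K}{32}\geq\frac12$) makes the sum $(0,32K+3)$-convex, yielding the admissible exponent $N'\geq 32K+3$ for this case.
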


Before going into the proof of Theorem \ref{thm:CDTheExample}, we need the following preliminary lemma, which is an improvement of \cite[Lemma 3.3]{MR4402722}.

\begin{lemma}\label{lem:est3}
Having fixed the constant $K\geq 1$ and given another constant $H>0$, it is possible to find $k$ sufficiently small such that the following holds. Given any $C^2$ function $y:I=[x_0,x_1] \to \setR^+$ such that $y'(x)\geq \frac14$ and $y''(x)\leq H \frac{k}{f(x)}$ for every $x \in I$, and calling $f_I$ the maximum of $f$ on the interval $I=[x_0,x_1]$ (i.e. $f_I=\max_{x_0\leq x\leq x_1} f(x)$), the function $-\log(m(x,y(x)))$ is $(\frac{K}{32f_I^2}, 32 K)$-convex.
\end{lemma}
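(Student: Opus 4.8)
The plan is to reduce the claimed $(K',N')$-convexity, with $K'=\frac{K}{32 f_I^2}$ and $N'=32K$, to a single \emph{pointwise} differential inequality, and then to extract the constant $K'$ from the bound $f\leq f_I$ on $I$. Writing out the density explicitly, set
\[
 g(x):=-\log\big(m(x,y(x))\big)=\log f(x)+K\,\frac{y(x)^2}{f(x)^2}.
\]
By the definition of $(K',N')$-convexity it suffices to establish the pointwise estimate
\[
 g''(x)\ \geq\ \frac{K}{32\,f(x)^2}+\frac{1}{32K}\,\big(g'(x)\big)^2\qquad\text{for every }x\in I,
\]
because $f(x)\leq f_I$ gives $\frac{K}{32 f(x)^2}\geq \frac{K}{32 f_I^2}$, so this upgrades to $g''\geq K'+\frac{1}{N'}(g')^2$ with the \emph{constant} $K'=\frac{K}{32 f_I^2}$, which is exactly the assertion. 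Thus the point of $f_I$ is only to turn a varying pointwise curvature into a uniform constant.

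To prove the pointwise estimate I would introduce $u:=y/f$, which satisfies $0<u\leq 1$ since the curve lies in $X_f$ (so $y\leq f$), and compute
\[
 g'=\frac{f'}{f}+2K\,u\,u',\qquad g''=(\log f)''+2K\,(u')^2+2K\,u\,u'',
\]
with $u'=\frac{y'f-yf'}{f^2}$ and $u''=\frac{y''}{f}-\frac{2y'f'}{f^2}-u\frac{f''}{f}+2u\frac{(f')^2}{f^2}$. The key point is that the single term $2K(u')^2$ already generates the required curvature: from $y'\geq\frac14$, $y\leq f$ and $|f'|\leq k$ one gets $u'\geq \frac{1/4-k}{f}$, hence
\[
 2K\,(u')^2\ \geq\ 2K\,\frac{(1/4-k)^2}{f^2},
\]
which for small $k$ is about four times the target $\frac{K}{32 f^2}$. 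Everything else must be shown to be a lower-order perturbation of this leading term.

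For the error control I would group the remaining contributions and bound them using $0<f<3k$, $|f'|\leq k$, $|f''|\leq 1$, $u\leq 1$, together with the hypothesis $|y''|\leq Hk/f$. The combination of $(\log f)''$ with the $-u\frac{f''}{f}$ piece of $2K u u''$ is $\frac{f''}{f}(1-2Ku^2)$, of size $O(K/f)$; although it may be negative, it is dominated by the leading $O(K/f^2)$ term precisely because $f<3k$ is small. The term $2Ku\frac{y''}{f}$ is estimated by $\big|2Ku\frac{y''}{f}\big|\leq \frac{2HKk}{f^2}$, negligible against $2K(u')^2$ thanks to the explicit factor $k$. The $y'$-dependent terms (namely $-4Ku\frac{y'f'}{f^2}$, after substituting $y'=u'f+uf'$) together with the $(f')^2$-terms are absorbed into a fraction of $2K(u')^2$ by Young's inequality, leaving an $O(Kk^2/f^2)$ remainder. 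Finally, using $(a+b)^2\leq 2a^2+2b^2$,
\[
 \frac{1}{32K}(g')^2\leq \frac{(f')^2}{16K f^2}+\frac{K\,u^2(u')^2}{4}\leq \frac{k^2}{16K f^2}+\frac{K(u')^2}{4},
\]
so the dimensional penalty is at most $\frac{K(u')^2}{4}$, i.e. one eighth of the leading term $2K(u')^2$ — this is exactly why the choice $N'=32K$, combined with $u\leq1$, is what makes the computation close. Collecting everything, for $k$ small enough (depending on the already-fixed $K$ and on the given $H$) the surviving share of $2K(u')^2$ still exceeds $\frac{K}{32 f^2}$, proving the pointwise inequality.

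The main obstacle is the simultaneous presence of two \emph{a priori} large terms: the logarithmic curvature $(\log f)''\sim f''/f$, which can be as negative as $-1/f$, and the $y''$-contribution of size $k/f^2$; each lives on a scale involving $1/f$ that is beaten by the leading term $2K(u')^2\sim K/f^2$ only because $f<3k$ forces $1/f$ to be negligible against $1/f^2$. This is what dictates the order of quantifiers in the statement — $K$ fixed first, then $H$ given, then $k$ chosen small — and what makes the factor $k$ in the bound on $y''$ essential rather than cosmetic. A secondary but crucial point is the uniform bound $u=y/f\leq 1$, i.e. that the curve stays inside $X_f$, which is what simultaneously keeps the $y''$-term and the dimensional penalty $\frac{1}{N'}(g')^2$ under control.
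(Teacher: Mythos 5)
Your proof is correct, and its core mechanism is the same as the paper's: reduce the claimed $\big(\tfrac{K}{32 f_I^2},32K\big)$-convexity to the pointwise inequality $g''(x)\geq \tfrac{K}{32 f(x)^2}+\tfrac{1}{32K}(g'(x))^2$ (the constant $f_I$ only enters at the end via $f\leq f_I$), identify the positive term of size $Ky'^2/f^2$ produced by the Gaussian exponent as the leading one, and beat all other contributions by taking $k$ small, the decisive point being that $1/f\leq 3k/f^2$ turns every $O(1/f)$ error into an $O(k/f^2)$ one. The only genuine difference is self-containedness: the paper does not redo the computation but quotes from the proof of Lemma 3.3 in \cite{MR4402722} both the formula for $g'$ and the lower bound $g''\geq \tfrac{K}{f^2}y'^2$, and then concludes in three lines by combining $|g'|\leq 4Ky'/f$ (valid for $k$ small) with the splitting $\tfrac{Ky'^2}{f^2}=\tfrac{Ky'^2}{2f^2}+\tfrac{Ky'^2}{2f^2}\geq \tfrac{1}{32K}(g')^2+\tfrac{K}{32 f_I^2}$, the last step using $y'\geq\tfrac14$ and $f\leq f_I$. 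Your substitution $u=y/f$, the isolation of $2K(u')^2$, and the Young-type absorption of the cross terms reconstruct precisely the content hidden in that citation; your bookkeeping $\tfrac{1}{32K}(g')^2\leq \tfrac{K(u')^2}{4}+\tfrac{k^2}{16Kf^2}$ plays the role of the paper's $\tfrac{1}{32K}\big(4Ky'/f\big)^2=\tfrac{Ky'^2}{2f^2}$. So: same route; yours is readable without the companion paper, the paper's is shorter.

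One point should be made explicit rather than silently repaired. Your estimate of the $y''$-term uses the two-sided bound $|y''|\leq Hk/f$, whereas the lemma literally assumes only $y''\leq Hk/f$. This is a defect of the statement, not of your argument: $y''$ enters $g''$ through the term $2K\tfrac{y}{f}\cdot\tfrac{y''}{f}$, whose coefficient is positive, so what is actually needed is the lower bound $y''\geq -Hk/f$; under the stated one-sided hypothesis the conclusion is false (take $f$ constant and a curve with $y'\geq \tfrac14$ whose second derivative is hugely negative at one point: $g''$ drops below any bound while $g'$ is unchanged). The paper's own proof requires exactly the same correction, concealed inside the citation to \cite{MR4402722}. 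The same remark applies to the bound $y\leq f$ (i.e. $u\leq 1$), which both you and the paper use although it is not among the stated hypotheses; it holds in all applications because the curves considered lie in $X_f$, but it should be listed as an assumption.
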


\begin{pr}
From the proof of \cite[Lemma 3.3]{MR4402722}, we know that 
\begin{equation*}
      \frac{\partial}{\partial x} \big(-\log(m(x,y(x)))\big) = \frac{f'(x)}{f(x)} +2K \frac{y(x)}{f(x)} \left( \frac{y'(x)}{f(x)} - \frac{y(x)f'(x)}{f(x)^2} \right),
\end{equation*}
in particular, keeping in mind that $\modu{\frac{y(x)}{f(x)}}\leq 1$ and $\modu{f'(x)}\leq k$, we deduce that
\begin{equation*}
\begin{split}
    \bigg|\frac{\partial}{\partial x} \big(-\log(m(x,y(x)))\big)\bigg| &\leq \bigg| \frac{f'(x)}{f(x)} \bigg| + 2K\bigg| \frac{y(x) y'(x)}{f(x)^2}\bigg| + 2K\bigg|\frac{y(x)f'(x)}{f(x)^3}\bigg|\\
    &\leq \frac{k}{f(x)} + 2K \frac{y'(x)}{f(x)} + 2K \frac{k}{f(x)} \leq 4K \frac{y'(x)}{f(x)},
\end{split}
\end{equation*}
where the last inequality holds for $k$ sufficiently small.
Moreover, from the computations done in the proof of \cite[Lemma 3.3]{MR4402722}, we deduce that for $k$ sufficiently small 
\begin{align*}
    \frac{\partial^2}{\partial x^2} \big(-\log(m(x,y(x)))\big) \geq \frac{K}{ f(x)^2}\, y'(x)^2.
\end{align*}
 We can then conclude that
 \begin{equation*}
    \begin{split}
         \frac{\partial^2}{\partial x^2} \big(-\log(m(x,y(x)))\big) \geq \frac{K}{ f(x)^2}\, y'(x)^2  &\geq  \frac{K}{32 f_I^2} + \frac{1}{32 K} \left[ 4K \frac{y'(x)}{f(x)}\right]^2 \\
         &\geq  \frac{K}{32 f_I^2} + \frac{1}{32 K} \left[\frac{\partial}{\partial x} \big(-\log(m(x,y(x)))\big)\right]^2,
    \end{split}
 \end{equation*}
 which proves the $(\frac{K}{32f_I^2}, 32 K)$-convexity.
\end{pr}

\begin{proof}[Proof of Theorem \ref{thm:CDTheExample}]
Fix $f\in \mathscr{F}_k$ and consider the metric measure space $(X,\di,\m)=(X_f,\di_\infty, \m_{f,K})$. We are going to prove that, for $k$ sufficiently small and $K$ sufficiently large, $(X,\di,\m)$ satisfies the $\CD(0,N)$ condition, for a suitable $N>1$. Moreover, the whole argument will not depend on the specific choice of $f$, but only on the properties defining the set $\mathscr{F}_k$ (which $f$ satisfies). In particular, the choice of the parameters $k$, $K$ and $N$ will be independent on $f\in \mathscr{F}_k$ and this will be sufficient to prove the statement.

Let $\mu_0,\mu_1\in \Prob_{ac}(X,\m)$, then, according to Proposition \ref{prop:CDinmidpoint}, it is sufficient to prove that, for every $N'\geq N$, we have 
\begin{equation*}
    S_{N'}(\mu_{1/2}) \leq (1-t) S_{N'}(\mu_0) + t S_{N'}(\mu_1),
\end{equation*}
where $\mu_{1/2}=[M\circ (\id,T)]_\# \mu_0$ is the midpoint selected in Section \ref{sec:defimidpoint}.
Given Proposition \ref{prop:jacobi}, it is enough check that for every $N'\geq N$
\begin{equation}\label{eq:condition}
      \left( m\big(M((x,y),T(x,y))\big) J_{M\circ(\id,T)}(x,y) \right)^\frac 1{N'} \geq \frac12  \left( m(T(x,y)) J_T(x,y) \right)^\frac 1{N'} + \frac12 (m(x,y))^\frac 1{N'},
\end{equation}
for $\mu_0$-almost every $(x,y)$. We are going to prove \eqref{eq:condition}, in different cases depending on the pair $((x,y),T(x,y))$. Observe that, since we have to prove \eqref{eq:condition} for $\mu_0$-almost every $(x,y)$, we can assume that the conclusions of Proposition \ref{prop:map} hold for every $(x,y)$ we are considering.

Let us start by proving \eqref{eq:condition} for the points $(x,y)$ such that $((x,y),T(x,y))\in H_0 $, recall that in this case we have
\begin{equation*}
    M\circ(\id,T) (x,y)=  \left(\frac{x+T_1}{2} , \frac12 \bigg( \frac{y}{f(x)}  +  \frac{T_2}{f(T_1)}  \bigg)f\left(\frac{x+T_1}{2}\right) \right).
\end{equation*}
In particular, keeping in mind Proposition \ref{prop:aeinjectivity} we can easily see that $M\circ(\id,T)$ satisfies the assumption of Proposition \ref{prop:rajalajacobi}, therefore it holds that
\begin{equation*}
    J_{M\circ(\id,T)} (x,y) = \frac{1}{2} \left( 1 + \frac{\partial T_1}{\partial x} \right) \frac{1}{2} \bigg( \frac{1}{f(x)}  +  \frac{\frac{\partial T_2}{\partial y}}{f(T_1)}  \bigg) f\left( \frac{x+T_1}{2} \right).
\end{equation*}
Moreover, we have that 
\begin{equation*}
    m\big(M((x,y),T(x,y))\big) = f\left( \frac{x+T_1}{2} \right)^{-1} \exp\left( \frac{-K}{4} \bigg( \frac{y}{f(x)}  +  \frac{T_2}{f(T_1)}  \bigg)^2\right),
\end{equation*}
thus, putting together these last two equation, we deduce that
\begin{equation*}
\begin{split}
    - \log \Big( m&\big(M((x,y),T(x,y))\big) J_{M\circ(\id,T)}(x,y) \Big) \\
    &= -\log \Bigg( \frac{1}{2} \left( 1 + \frac{\partial T_1}{\partial x} \right) \frac{1}{2} \bigg( \frac{1}{f(x)}  +  \frac{\frac{\partial T_2}{\partial y}}{f(T_1)}  \bigg)  
    \exp\Bigg( \frac{-K}{4} \bigg( \frac{y}{f(x)}  +  \frac{T_2}{f(T_1)}  \bigg)^2\Bigg) \Bigg)\\
    &= -\log \left(\frac{1}{2} \left( 1 + \frac{\partial T_1}{\partial x} \right) \right) - \log \left( \frac{1}{2} \bigg( \frac{1}{f(x)}  +  \frac{\frac{\partial T_2}{\partial y}}{f(T_1)}  \bigg)\right) + K  \bigg(\frac12 \bigg( \frac{y}{f(x)}  +  \frac{T_2}{f(T_1)}  \bigg) \bigg)^2.
\end{split}
\end{equation*}
On the other hand, we have that
\begin{equation*}
    -\log(m(x,y)) = -\log\left(\frac{1}{f(x)} \exp \left(-K \bigg(\frac{y}{f(x)}\bigg)^2 \right) \right)=- \log(1)- \log\left(\frac{1}{f(x)} \right) +K \bigg(\frac{y}{f(x)}\bigg)^2 
\end{equation*}
and, applying once again Proposition \ref{prop:rajalajacobi}, this time to the map $T$, 
\begin{align*}
    -\log \left( m(T(x,y)) J_T(x,y) \right)&= -\log \left( \frac{\partial T_1}{\partial x} \frac{\partial T_2}{\partial y} \frac{1}{f(T_1)} \exp \left(-K \bigg(\frac{T_2}{f(T_1)}\bigg)^2 \right)\right)\\
    &=-\log \left( \frac{\partial T_1}{\partial x} \right)- \log \left( \frac{\frac{\partial T_2}{\partial y}}{f(T_1)}\right)+K \bigg(\frac{T_2}{f(T_1)}\bigg)^2.
\end{align*}
Observe now that, combining the statements of Example \ref{ex:KNconv}, Remark \ref{rmk:repar} and Lemma \ref{lem:sumKN}, we deduce that the function 
\begin{equation*}
    [0,1]\ni t \mapsto -\log \left(  (1-t) + t \frac{\partial T_1}{\partial x}  \right) - \log \left(  (1-t) \frac{1}{f(x)}  +  t \frac{\frac{\partial T_2}{\partial y}}{f(T_1)}  \right) + K  \bigg( (1-t)\frac{y}{f(x)}  +  t \frac{T_2}{f(T_1)} \bigg)^2.
\end{equation*}
is $(0,2K+2)$-convex and therefore $(0,N')$-convex for every $N'\geq 2K+2$. Then, according to \eqref{eq:0Nconv=concavity}, we deduce that \eqref{eq:condition} holds for every $N'\geq 2K+2$.

We now proceed to prove \eqref{eq:condition} for ($\mu_0$-almost) every $(x,y)$ such that $((x,y),T(x,y))\in V$, the strategy is similar but requires Lemma \ref{lem:est3}. In this case the midpoint interpolation map is trivial, therefore it holds that 
\begin{equation*}
    J_{M\circ(\id,T)} (x,y) = \frac{1}{2} \left( 1 + \frac{\partial T_1}{\partial x} \right) \cdot  \frac{1}{2} \left( 1 + \frac{\partial T_2}{\partial y} \right).
\end{equation*}
\\
In particular, we have that 
\begin{equation*}
\begin{split}
    - \log \Big( m&\big(M((x,y),T(x,y))\big) J_{M\circ(\id,T)}(x,y) \Big) \\
    &= -\log \left(\frac{1}{2} \left( 1 + \frac{\partial T_1}{\partial x} \right) \right) - \log \left(\frac{1}{2} \left( 1 + \frac{\partial T_2}{\partial y} \right) \right) - \log \Big( m\big(M((x,y),T(x,y))\big)  \Big)
\end{split}
\end{equation*}
and, on the other hand,
\begin{equation}\label{eq:jacobiendpoint}
     -\log \left( m(T(x,y)) J_T(x,y) \right)=-\log \left( \frac{\partial T_1}{\partial x} \right)- \log \left(\frac{\partial T_2}{\partial y}\right)-\log \left( m(T(x,y)) \right).
\end{equation}
Now, we can assume without loss of generality that $x<T_1$ and define the function $z:[x,T_1]\to \R$ parameterizing the segment connecting $(x,y)$ and $T(x,y)$, i.e. $z(t)= y + (T_2-y)\frac{t-x}{T_1-x}$ for every $t\in[x,T_1]$. Then, applying Lemma \ref{lem:est3}, we deduce that the function $-\log(m(t,z(t)))$ is $(\frac{K}{32f_{[x,T_1]}^2}, 32 K)$-convex (on $[x,T_1]$), thus also $(0, 32 K)$-convex. (Observe that when $x>T_1$ we can use an analogous argument, while, when $x=T_1$ the direct computation yields the same convexity.) Then, we can proceed as in the previous case and conclude that \eqref{eq:condition} holds for every $N'\geq 32K+2$. The case of $(x,y)$ such that $((x,y),T(x,y))\in D$ can be solved following the same strategy, after a change of variable, as done in \cite{rajala2013failure}.

We are left with the last case, which consists in proving \eqref{eq:condition} for ($\mu_0$-almost) every $(x,y)$ such that $((x,y),T(x,y))\in H_1$ (with $x<T_1(x,y)$ and $y<T_2(x,y)$). Before developing the argument, we notice that 
\begin{equation*}
    f(T_1) \geq T_2 \geq y + \frac{T_1-x}{2} \geq \frac{T_1-x}{2},
\end{equation*}
therefore, since $|f'|\leq k$, we deduce that
\begin{equation}\label{eq:stimafI}
  f_I:=f_{[x,T_1]}= \max_{x\leq r\leq T_1} f(r)  \leq f(T_1) + k (T_1-x) \leq (1+2k) f(T_1) \leq 2 f(T_1),
  \end{equation}
  for $k$ sufficiently small. Back to the argument, consider the map 
\begin{align*}
    (S_1,S_2)(x,y)&:= M \circ (\id, T)(x,y) \\
   &\quad = \left(\frac{x+T_1}{2}, y + \tilde{y}(x,T_1,y) + \left(\frac{T_1-x}{2} - \tilde{y}(x,T_1,y) \right) \left( 2 \frac{T_2 - y}{T_1-x} -1 \right) \right).
\end{align*}
Proceeding as before, Proposition \ref{prop:rajalajacobi} and Proposition \ref{prop:aeinjectivity} ensure that
\begin{equation*}
     J_{M\circ(\id,T)} (x,y)= \frac{\partial S_1}{\partial x} \frac{\partial S_2}{\partial y},
\end{equation*}
and therefore we have
\begin{equation*}
\begin{split}
    - \log \Big( m\big(M((x,y),T(x,y))\big)& J_{M\circ(\id,T)}(x,y) \Big) \\
    &= -\log \left( \frac{\partial S_1}{\partial x}\right) -\log \left( \frac{\partial S_2}{\partial y}\right) - \log \Big( m\big(M((x,y),T(x,y))\big)  \Big),
\end{split}
\end{equation*}
while, as before, \eqref{eq:jacobiendpoint} holds.
On the one hand, we easily have that    
\begin{equation*}
    \frac{\partial S_1}{\partial x}=\frac{1}{2} \left( 1 + \frac{\partial T_1}{\partial x} \right),
\end{equation*}
and, on the other hand, from the proof of Theorem 6.1 in \cite{MR4402722} we know that 
\begin{align*}
    \frac{\partial S_2}{\partial y} = 1+ \frac{\partial}{\partial y}\tilde{y}(x,T_1,y) \left( 2 - 2 \frac{T_2 - y}{T_1-x} \right) + \left( \frac{\partial T_2}{\partial y}-1 \right) \left(\frac12 - \frac{\tilde{y}(x,T_1,y)-\frac{T_1-x}{4}}{\frac{T_1-x}{2}} \right).
\end{align*}
Moreover, the computations in the proof of Theorem 6.1 in \cite{MR4402722} show that 
\begin{equation*}
    \modu{\frac{\tilde{y}(x,T_1,y)-\frac{T_1-x}{4}}{\frac{T_1-x}{2}}} \leq \frac{[2k^2 + 4k]\frac{T_1-x}{2}}{f(T_1)} < \frac{1}{2^{12}} \frac{\frac{T_1-x}{2}}{f(T_1)} \leq \frac{1}{2^{11}} \frac{\frac{T_1-x}{2}}{f_I},
\end{equation*}
where the second inequality holds for a sufficiently small $k$ and the third follows from \eqref{eq:stimafI}.
Now, suppose that 
\begin{equation*}
    \frac{\partial}{\partial y}\tilde{y}(x,T_1,y)=  \frac 12 \frac{f\big(\frac{x+T_1}{2}\big)}{f(x)} + \frac 12 \frac{f\big(\frac{x+T_1}{2}\big)}{f(T_1)}-1 \geq0 .
\end{equation*}
Then, after noticing that $\frac{T_1-x}{2}\leq f(T_1)\leq f_I$, we can apply Lemma \ref{lem:approxconv} and find a function $h:[0,1]\to \R$, with $h(0)=1$, $h(1)= \frac{\partial T_2}{\partial y}$ and 
\begin{equation}\label{eq:excess}
    h(1/2)= 1 + \left( \frac{\partial T_2}{\partial y}-1 \right) \left(\frac12 - \frac{\tilde{y}(x,T_1,y)-\frac{T_1-x}{4}}{\frac{T_1-x}{2}} \right) \leq \frac{\partial S_2}{\partial y}
\end{equation}
such that 
\begin{equation}\label{eq:convpazza}
    -\log(h(t)) \text{ is }\bigg(-  2^{21}\, \bigg[\frac{T_1-x}{2^{11} f_I}\bigg]^2,2\bigg)= \bigg(- \frac 12 \, \bigg[\frac{T_1-x}{f_I}\bigg]^2,2\bigg) \text{-convex}.
\end{equation}
On the other hand, following the argument in the proof of Theorem 6.1 in \cite{MR4402722}, we can find a function $z:[x,T_1]\to R$ satisfying the assumption of Lemma \ref{lem:est3} such that 
\begin{equation*}
    \bigg(\frac{x+T_1}{2},z\bigg(\frac{x+T_1}{2}\bigg)\bigg)= M((x,y),T(x,y))
\end{equation*}
In particular, according to Lemma \ref{lem:est3} and Remark \ref{rmk:repar} the function 
\begin{equation*}
    t \mapsto - \log\Big(m\Big((1-t)x+ tT_1, z\big((1-t)x+ tT_1\big)\Big)\Big)
\end{equation*}
is $(\frac{K}{32f_I^2} (T_1-x)^2, 32 K)$-convex. Then, according to the Lemma \ref{lem:sumKN} and keeping in mind \eqref{eq:convpazza}, whenever $K$ is sufficiently large (i.e. $K\geq 16$) we have that the function 
\begin{equation*}
    t \mapsto -\log\left( 1 + t\left(\frac{\partial T_1}{\partial x}-1\right) \right)   -\log(h(t)) - \log\Big(m\Big((1-t)x+ tT_1, z\big((1-t)x+ tT_1\big)\Big)\Big) 
\end{equation*}
is $(0,32K + 3)$-convex. Proceeding as in the first case and keeping in mind the inequality in \eqref{eq:excess}, we prove that \eqref{eq:condition} holds for every $N'\geq32K+3$. 

If instead 
\begin{equation*}
    \frac{\partial}{\partial y}\tilde{y}(x,T_1,y)=  \frac 12 \frac{f\big(\frac{x+T_1}{2}\big)}{f(x)} + \frac 12 \frac{f\big(\frac{x+T_1}{2}\big)}{f(T_1)}-1 <0 ,
\end{equation*}
the argument can be adapted following the same strategy developed in the proof of Theorem 6.1 in \cite{MR4402722}. We are then able to prove \eqref{eq:condition} for $\mu_0$-almost every $(x,y)$ in every case, concluding the proof.
\end{proof}

Now, we want to combine Theorem \ref{thm:CDTheExample} with the stability of the $\cd(0,N)$ condition with respect to the measured Gromov-Hausdorff convergence, in order to prove the $\cd(0,N)$ condition for singular spaces. To this aim, we introduce the set 
\begin{equation*}
    \overbar{\mathscr{ F}_k}:= \big\{f \in C^2\big([-1,1]\big) \,:\, 0\leq f < 3k, \, |f'|\leq k, \, |f''|\leq 1 \big\} \supset \mathscr F_k,
\end{equation*}
and we extend the definition of the metric measure space $(X_f,\di_\infty, \m_{f,K})$ to all functions $f \in \overbar{\mathscr{ F}_k}$, in analogy to what did in Section \ref{sec:defofMMS}. In particular, given any $\overbar{\mathscr{ F}_k}$, the definition of the space $X_f$ is the same as before, i.e.
\begin{equation*}
    X_f=\{(x,y)\in \setR^2\suchthat x \in [-1,1] \text{ and } 0\leq y\leq f(x)\}.
\end{equation*}
while the measure $\m_{f,K}$ becomes singular:  \begin{equation*}
    \m_{f,K} := \mathbbm{1}_{\{f(x)=0\}}\cdot  C_K \cdot\mathcal   H^1 |_{y=0} + \mathbbm{1}_{\{f(x)>0\}} \cdot\frac{1}{f(x)} \exp\left(-K \left(\frac{y}{f(x)} \right)^2\right)\cdot \Leb^2|_{X_f},
\end{equation*}
where, as before, $C_{K}= \int_0^1 e^{-K y^2} \de y$.

\begin{remark}\label{rmk:mGH}
    The measured Gromov-Hausdorff convergence is a notion of convergence for metric measure spaces that basically combines the Hausdorff convergence for the metric side and the weak convergence for the reference measures. It has different equivalent definitions (see \cite{Gigli_2015}), but for the purpose of this paper it is sufficient to consider the definition given in Villani's book \cite[Definition 27.30]{villani2008}. The $\cd$ condition is stable with respect to the measure Gromov-Hausdorff convergence, see Theorem 29.24 and Theorem 29.25 in \cite{villani2008}.
\end{remark}

\begin{corollary}\label{cor:main}
    For $k$ sufficiently small and $K$ sufficiently large, there exists $N>1$ such that for every $f\in \overbar{\mathscr{F}_k}$ the metric measure space $(X_f,\di_\infty, \m_{f,K})$ is a $\CD(0,N)$ space.
\end{corollary}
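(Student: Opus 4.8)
The plan is to obtain the statement from Theorem \ref{thm:CDTheExample} by approximating a general $f\in\overbar{\mathscr{F}_k}$ with strictly positive profiles in $\mathscr{F}_k$ and then passing to the limit, using the stability of the $\cd(0,N)$ condition under measured Gromov--Hausdorff convergence recalled in Remark \ref{rmk:mGH}. The decisive feature making this possible is that the parameter $N$ furnished by Theorem \ref{thm:CDTheExample} is the \emph{same} for every $f\in\mathscr{F}_k$; hence an approximating sequence built from functions in $\mathscr{F}_k$ consists of $\cd(0,N)$ spaces with one common $N$, a property that survives the limit.

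First I would construct the approximating profiles. Fix $f\in\overbar{\mathscr{F}_k}$; since $f$ is continuous on the compact interval $[-1,1]$ with $f<3k$ pointwise, we have $\max_{[-1,1]}f<3k$, so for any sequence $\epsilon_n\downarrow 0$ the functions $f_n:=f+\epsilon_n$ satisfy $0<f_n<3k$ for $n$ large. As $f_n'=f'$ and $f_n''=f''$, the bounds $|f_n'|\le k$ and $|f_n''|\le 1$ are inherited, so $f_n\in\mathscr{F}_k$ and, by Theorem \ref{thm:CDTheExample}, each $(X_{f_n},\di_\infty,\m_{f_n,K})$ is a $\cd(0,N)$ space with $N$ independent of $n$.

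The substantive step is the measured Gromov--Hausdorff convergence $(X_{f_n},\di_\infty,\m_{f_n,K})\to(X_f,\di_\infty,\m_{f,K})$. All these are subspaces of the fixed ambient space $(\R^2,\di_\infty)$, and since $X_f\subseteq X_{f_n}$ while every point of $X_{f_n}$ lies within $l_\infty$-distance $\epsilon_n$ of $X_f$, the supports converge in Hausdorff distance with $d_H(X_f,X_{f_n})\le\epsilon_n\to 0$. For the measures, the clean route is the change of variables $u=y/f_n(x)$, which for every $\varphi\in C_b(\R^2)$ gives
\begin{equation*}
    \int\varphi\,\de\m_{f_n,K}=\int_{-1}^1\int_0^1\varphi\big(x,u\,f_n(x)\big)\,e^{-Ku^2}\,\de u\,\de x,
\end{equation*}
an integral over the fixed domain $[-1,1]\times[0,1]$ against the fixed weight $e^{-Ku^2}$. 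Since $f_n\to f$ pointwise and the integrand is dominated by $\|\varphi\|_\infty e^{-Ku^2}$, dominated convergence yields the limit $\int_{-1}^1\int_0^1\varphi(x,u\,f(x))\,e^{-Ku^2}\,\de u\,\de x$, and a direct inspection shows this equals $\int\varphi\,\de\m_{f,K}$: on $\{f>0\}$ it reproduces the absolutely continuous part, while on $\{f=0\}$ the inner integral collapses to $\varphi(x,0)\,C_K$, which is exactly the singular contribution $C_K\,\mathcal H^1|_{y=0}$. Hence $\m_{f_n,K}\weakto\m_{f,K}$, and together with the Hausdorff convergence of supports (all spaces being compact with the common total mass $2C_K$) this is the required measured Gromov--Hausdorff convergence.

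Finally, I would invoke the stability of $\cd(0,N)$ under measured Gromov--Hausdorff convergence (Theorems 29.24 and 29.25 in \cite{villani2008}, cf.\ Remark \ref{rmk:mGH}): being the limit of $\cd(0,N)$ spaces with a fixed $N$, the space $(X_f,\di_\infty,\m_{f,K})$ is itself $\cd(0,N)$, which proves the corollary. I expect the main point to be the identification of the weak limit of the measures on $\{f=0\}$, i.e.\ that the collapsing Gaussian strips converge to the one-dimensional Hausdorff measure with exactly the constant $C_K$; this is made transparent by the change of variables above and ultimately rests on the normalization built into $\m_{f,K}$, whose $x$-marginal is $C_K\,\mathcal H^1$ for every admissible profile. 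A minor point to check is that the normalization of the reference measures to probability measures (dividing by $2C_K$), needed to match the stated stability results, does not affect the $\cd(0,N)$ condition, since rescaling $\m$ by a positive constant only rescales $S_{N'}$ by a positive factor and leaves the convexity inequality intact.
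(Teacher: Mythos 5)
Your proposal is correct and follows exactly the paper's route: approximate $f\in\overbar{\mathscr{F}_k}$ by $f+\varepsilon_n\in\mathscr{F}_k$, invoke Theorem \ref{thm:CDTheExample} with the crucial uniformity of $N$ over $\mathscr{F}_k$, and pass to the limit using stability of $\cd(0,N)$ under measured Gromov--Hausdorff convergence. The only difference is that you verify the convergence \eqref{eq:mGHconv} directly (via the change of variables $u=y/f_n(x)$, which correctly identifies the collapsing strips' limit as $C_K\,\mathcal H^1|_{y=0}$ on $\{f=0\}$), whereas the paper defers this step to the proof of Theorem 7.1 in \cite{MR4402722}.
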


\begin{proof}
    For suitable $k$ and $K$, Theorem \ref{thm:CDTheExample} guarantees the existence of $N>1$ such that $(X_g,\di_\infty, \m_{g,K})$ is a $\cd(0,N)$ space for every $g\in \mathscr{F}_k$. Now, given $f\in \overbar{\mathscr{F}_k}$, take a sequence $(\varepsilon_n)_{n\in \mathbb N}$ of positive numbers converging to $0$, notice that definitely $f+\varepsilon_n\in \mathscr{F}_k$ and thus $(X_{f+\varepsilon_n},\di_\infty, \m_{f+\varepsilon_n,K})$ is a $\CD(0,N)$ space. Moreover, it is easy to realize that 
    \begin{equation}\label{eq:mGHconv}
        (X_{f+\varepsilon_n},\di_\infty, \m_{f+\varepsilon_n,K}) \longrightarrow (X_f,\di_\infty, \m_{f,K}) \quad\text{as }n\to \infty,
    \end{equation}
    with respect to the the measured Gromov-Hausdorff convergence. From the stability of the $\cd(0,N)$ condition (see Remark \ref{rmk:mGH}), we conclude that $(X_f,\di_\infty, \m_{f,K})$ is a $\CD(0,N)$ space.
\end{proof}

\begin{remark}
    For a formal proof of the measured Gromov-Hausdorff convergence \eqref{eq:mGHconv} we refer to the proof of Theorem 7.1 in \cite{MR4402722}, the setting of \cite{MR4402722} is less general but the strategy developed in that case works also in the context of this work.
\end{remark}

\begin{remark}
    Observe that, the fact that the constant $N>1$ in Theorem \ref{thm:CDTheExample} does not depend on the specific choice of the function $f\in \mathscr F_k$, is crucial in the proof of Corollary \ref{cor:main}.
\end{remark}

\section{Conclusions}

\subsection{Examples of singular $\cd(0,N)$ spaces}\label{sec:conc1}

Using Corollary \ref{cor:main} we can construct interesting examples of singular $\cd(0,N)$ spaces, in fact, given a function $f\in \overbar{\mathscr{F}_k}\setminus\mathscr{F}_k$, the measured Gromov-Hausdorff limit procedure in the proof of Corollary \ref{cor:main} makes the $y$-dimension collapse in $\{f(x)=0\}$. For example, taking $f\in \overbar{\mathscr{F}_k}$ increasing and such that $\{f=0\}=[-1,0]$, the metric measure space $(X_f,\di_\infty, \m_{f,K})$, which is $\cd(0,N)$, is similar to the representation in Figure \ref{fig:example1}.
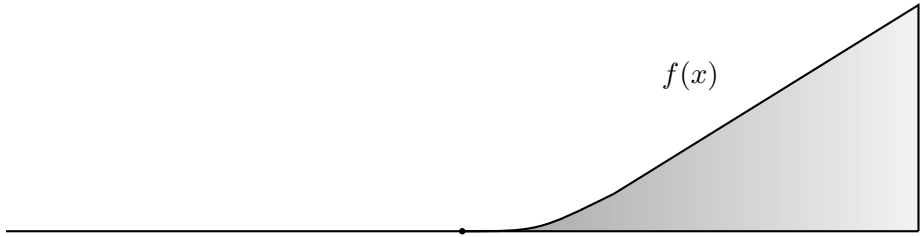
\begin{figure}[h]
\begin{center}

\begin{tikzpicture} 

\shade[left color=black!37!white,right color=black!5!white] (0,0)--(6,0)--(6,3)--(2,0.5)..controls (1,0)..(0,0)--cycle;
\draw[thick](-6,0)--(6,0);
\filldraw[black] (0,0) circle (1pt);
\draw[thick](6,0)-- (6,3)--(2,0.5);
\draw[thick](0,0)..controls (1,0)..(2,0.5); 
\node at (3,1.6)[label=north:$f(x)$] {};
\end{tikzpicture}

\caption{The metric measure space $(X_f,\di_\infty, \m_{f,K})$.}
\label{fig:example1}
\end{center}
\end{figure}

Observe that this space has different topological and Hausdorff dimensions in different regions, in fact $X_f \cap ([-1,0]\times \R)$ has (topological and Hausdorff) dimension 1, while $X_f \cap ([0,1]\times \R)$ has (topological and Hausdorff) dimension 2. This observation generalizes a result by Ketterer and Rajala \cite{ketterer2014failure}, who constructed a space with non-constant dimension, satisfying the so-called measure contraction property $\MCP$ (see \cite{OhtaMCP} for the definition). The extension of this result to $\cd(0,N)$ spaces is not obvious, since the $\cd$ condition is strictly stronger than the measure contraction property and forces the space to be a little bit more rigid. 

Moreover, as highlighted in the introduction, the proved possible non-constancy of the dimension for $\cd(0,N)$ spaces is especially interesting in relation to what happens in the context of $\RCD$ spaces, that are CD spaces which are also infinitesimally Hilbertian (cfr. \cite{giglidiff,Ambrosio_2014}). In fact, it was proved by Brué and Semola in \cite{bruesemola} that every $\RCD(K,N)$ space has constant dimension. The example presented in this section proves that the same is not true for $\CD(0,N)$ spaces, showing in particular that, as expected, the infinitesimal Hilbertianity assumption is necessary in \cite{bruesemola}. 

The function $f\in \overbar{\mathscr{F}_k}\setminus\mathscr{F}_k$ we have considered up to now in this section identifies a set $X_f$, which is basically the same as the one considered in \cite{MR4402722}. However, the more general approach adopted in this work allows to identify other examples of $\cd$ spaces with different shapes. In particular, Corollary \ref{cor:main} proves that the $\cd(0,N)$ condition can be verified also by spaces having the shapes represented in Figure \ref{fig:example2}. This shows that one-dimensional and two-dimensional parts can be alternated along $\{y=0\}$ and the $\cd(0,N)$ can still be true.

\begin{figure}[h]
\begin{center}

\begin{tikzpicture} 

\filldraw[white] (0,2.8) circle (1pt);
\shade[left color=black!40!white,right color=black!5!white] (-7,0)..controls (-6,0) and (-6,2) ..(-5,2)-- (-5,0)--cycle;
\shade[left color=black!5!white,right color=black!40!white] (-3,0)..controls (-4,0) and (-4,2) ..(-5,2)-- (-5,0)--cycle;
\draw[thick](-8,0)--(-2,0);
\draw[thick] (-7,0)..controls (-6,0) and (-6,2) ..(-5,2)..controls (-4,2) and (-4,0)..(-3,0);

\shade[left color=black!5!white,right color=black!40!white](0,0)--(0,2)..controls (2,0).. (2.7,0);
\shade[left color=black!40!white,right color=black!5!white](3.3,0)..controls (4,0).. (6,2)--(6,0);
\draw[thick](6,0)--(0,0);
\draw[thick](0,0)--(0,2)..controls (2,0).. (2.7,0);
\draw[thick](3.3,0)..controls (4,0).. (6,2)--(6,0);

\end{tikzpicture}

\caption{Possible shapes for a metric measure space of the type  $(X_{f},\di_\infty, \mathfrak{m}_{f,K})$ with $f\in \overbar{\mathscr{F}_k}$.}
\label{fig:example2}
\end{center}
\end{figure}
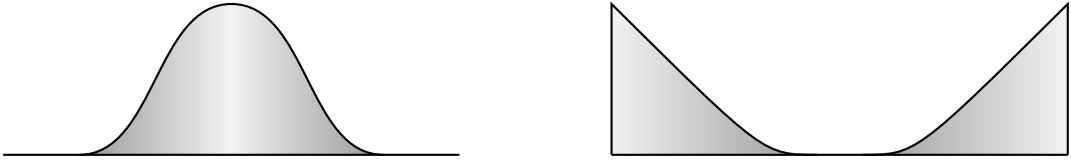

\subsection{Stronger $\cd$ conditions}\label{sec:conc2}

There are different ways to require convexity of the entropy along Wasserstein geodesics and they originate different $\cd$ conditions. In particular, the $\cd$ condition defined in Section \ref{sec:CD} is the weakest version, because it just requires the existence of a geodesic along which the entropy is convex. The most natural strengthening brings to the definition of the strong $\cd$ condition.

\begin{definition}
    A metric measure space $(X,\di,\m)$ is said to be a strong $\cd(0,\infty)$ space (or to satisfy the strong $\cd(0,\infty)$ condition) if the entropy functional $\Ent$ (see \eqref{eq:BSEnt}) is convex along every constant speed $W_2$-geodesic connecting two measures $\mu_0,\mu_1 \in \Prob_{ac}(X,\m)$.
\end{definition}

\begin{remark}
    The strong $\cd$ condition is usually defined with the dimensional parameter equal to $\infty$ for a technical reason related to Remark \ref{rmk:Ent}.
\end{remark}

This strengthening of the $\cd$ condition is sufficient to show the following result on the geodesic structure of the space, which was proved by Rajala and Sturm in \cite{rajalasturm}. 

\begin{theorem}\label{thm:RajalaSturm}
Every strong $\CD(0,\infty)$ metric measure space $(X,\di,\m)$ is essentially non-branching, i.e. for every pair $\mu_0,\mu_1\in\Prob_{ac}(X,\m)$, every $\eta\in \Prob(\Geo(X))$ representing a geodesic (see Remark \ref{rmk:W2geo}) connecting them, is concentrated on a non-branching set of geodesics.
\end{theorem}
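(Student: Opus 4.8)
The plan is to prove that a strong $\CD(0,\infty)$ space is essentially non-branching by a contradiction argument, exploiting the strict convexity of the entropy that is hidden inside the strong $\CD$ condition. The key mechanism is that branching allows one to \emph{rearrange} geodesics to produce a strictly cheaper interpolation, contradicting the convexity (in fact strict convexity) of $\Ent$ along \emph{every} geodesic. Suppose, for contradiction, that there exist $\mu_0,\mu_1 \in \Prob_{ac}(X,\m)$ and a geodesic plan $\eta \in \Prob(\Geo(X))$ connecting them which is concentrated on a set of geodesics that branch on a subset of positive $\eta$-measure.

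First I would make the notion of branching quantitative. Two geodesics $\gamma,\gamma'$ branch if they coincide on an initial (or final) subinterval $[0,s]$ but separate afterwards. On a positive-measure set of such branching pairs one can, after restricting $\eta$, assume all geodesics agree up to some time $s$ and then split into two bundles. The crucial point is that the intermediate measures $\mu_t = (e_t)_\sharp \eta$ remain absolutely continuous on the whole geodesic (this uses that the strong $\CD$ condition, with convexity required along \emph{every} geodesic connecting absolutely continuous endpoints, forces finite entropy throughout). I would then split $\eta = \frac12 \eta_A + \frac12 \eta_B$ into two sub-plans according to the branching behaviour and restrict attention to the portion $[s,1]$ where separation occurs.

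The heart of the argument is a cut-and-paste (gluing) construction. On the branching set one glues the initial common piece of one bundle to the final piece of the \emph{other} bundle, producing a new admissible dynamical plan $\tilde\eta$ with the \emph{same} endpoint marginals $\mu_0,\mu_1$, but whose associated curve of measures differs from $(\mu_t)$. Because the construction genuinely mixes the bundles at the branch point, the resulting interpolation produces a measure at an intermediate time that is \emph{more spread out} (a proper average of two distinct densities), so by strict convexity of $z\mapsto z\log z$ its entropy is strictly smaller than the value predicted by linear interpolation. This yields a competitor geodesic along which $\Ent$ is \emph{not} convex, contradicting the defining property of strong $\CD(0,\infty)$. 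I would carry this out following the template of Rajala--Sturm \cite{rajalasturm}: verify that $\tilde\eta$ is again a $W_2$-geodesic with absolutely continuous endpoints, and locate a time at which the strict convexity deficit occurs.

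The main obstacle, and the step requiring the most care, is controlling the absolute continuity and the entropy of the intermediate measures after the gluing, so that the strict-convexity comparison is actually licit and quantitative. One must ensure the glued plan still corresponds to genuine geodesics (not merely curves) and that the measure-theoretic branching set has the right product/restriction structure so that the two bundles one recombines are measurably well-defined and carry comparable mass; this is where a careful measurable selection of the branch points and a restriction-to-positive-density argument are needed. Since this is precisely the content of \cite{rajalasturm}, I would invoke their construction, emphasizing that the strong $\CD(0,\infty)$ hypothesis (convexity along every geodesic, not just some geodesic) is exactly what upgrades the usual convexity into the rigidity forcing essential non-branching.
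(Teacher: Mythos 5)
First, a remark on the comparison itself: the paper does not prove Theorem \ref{thm:RajalaSturm}; it quotes it as a known result of Rajala and Sturm \cite{rajalasturm}. So your sketch can only be measured against the argument in that reference, whose template (contradiction, restriction to two sub-bundles $\eta_A,\eta_B$ of positive mass sharing their initial segments up to a branch time $t$, absolute continuity of interior measures forced by convexity along every geodesic, and an entropy-of-mixtures computation) you correctly identify at the structural level.

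However, the core mechanism you state is logically backwards, and this is a genuine gap. You claim the recombined plan produces an intermediate measure that is ``more spread out'', whose entropy is therefore \emph{strictly smaller} than the linearly interpolated value, and that this contradicts convexity. It does not: convexity of $\Ent$ along a geodesic is the upper bound $\Ent(\mu_s)\leq(1-s)\Ent(\mu_0)+s\,\Ent(\mu_1)$, so an intermediate entropy lying strictly \emph{below} the chord is perfectly consistent with strong $\cd(0,\infty)$. A contradiction requires a geodesic and an interior time at which the entropy lies strictly \emph{above} the chord, and in \cite{rajalasturm} that point is the branch time itself. Concretely, set $\tilde\eta=\frac12(\eta_A+\eta_B)$, $g(s)=\Ent\big((e_s)_\#\tilde\eta\big)$ and $h(s)=\frac12\Ent\big((e_s)_\#\eta_A\big)+\frac12\Ent\big((e_s)_\#\eta_B\big)$. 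One has $g=h$ on $[0,t]$ (the bundles coincide there, so no mixing occurs), while at any time $s>t$ where the two bundles are essentially mutually singular, $g(s)=h(s)-\log 2$. Since $h$ is convex (strong $\cd$ applies along each bundle separately), it is continuous, so its symmetric second difference $h(t+\delta)+h(t-\delta)-2h(t)$ tends to $0$ as $\delta\to 0$; hence for small $\delta$ the midpoint inequality $g(t)\leq\frac12\big[g(t-\delta)+g(t+\delta)\big]$ fails by nearly $\frac12\log 2$. The contradiction is thus that the \emph{unmixed} measure at the branch time sits above the chord of its mixed neighbours — the opposite of what you wrote. This also reveals the real technical crux, which your sketch does not address: one must select the sub-bundles so that they separate \emph{as measures} (not merely as pointwise distinct geodesics) at times arbitrarily close to $t$, because the fixed gain $\log 2$ has to beat the vanishing second difference of $h$. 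A secondary issue: your cut-and-paste plan is vacuous as described, since the two bundles have the same law of initial segments on $[0,t]$; gluing initial pieces of one to final pieces of the other reproduces the original bundles, and the plan actually used is simply the average $\frac12(\eta_A+\eta_B)$, i.e. a reweighted restriction of $\eta$ — no rearrangement is needed. Finally, strong $\cd(0,\infty)$ asserts convexity, not strict convexity, of $\Ent$; strictness plays no role in the correct argument, whereas the quantified $\log 2$ drop from mixing mutually singular measures does all the work.
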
 

\noindent It is easy to realize that the same result cannot be true for $\cd$ spaces, an example for this is the metric measure space $(\R^2, \di_\infty, \Leb^2)$, which is a $\cd(0,2)$ space but it is not an essentially non-branching space. The same example shows that the strong $\cd$ condition is not stable with respect to the measured Gromov-Hausdorff convergence, and this constitutes a major flaw. It is interesting to wonder whether there exists a stronger version of the $\cd$ condition, which is stable with respect to the measured Gromov-Hausdorff convergence and still guarantees some additional properties on the space. To this purpose we consider the definition of the strict $\cd$ condition.

\begin{definition}\label{def:strictCD}
Given $N>1$, the metric measure space $(X,\di,\m)$ is said to be a strict $\cd(0,N)$ space (or to satisfy the strict $\cd(0,N)$ condition) if for every pair of measures $\mu_0,\mu_1 \in \Prob_{ac}(X,\m)$ there exists $\eta\in \Prob(\Geo(X))$ representing a constant speed $W_2$-geodesic $(\mu_t)_{t\in [0,1]}\subset \Prob_{ac}(X,\m)$ connecting them, such that, for every bounded measurable function $f : \Geo(X) \to \setR^+$ with $\int f \de \eta=1$ and every $N'\geq N$, the entropy functional $S_{N'}$ is convex along the geodesic $t \mapsto (e_t)_\# (f\eta)$.
\end{definition}

\noindent Analogously, it is possible to define the strict $\cd(0,\infty)$ condition, which was proved not to be stable with respect to the measured Gromov-Hausdorff convergence in \cite{MR4402722}. It is then not surprising that the same holds for the strict $\cd(0,N)$, in fact proceeding as in \cite{MR4402722} (see in particular Section 7) we can prove the following result.

\begin{prop}\label{prop:nonstab}
    Fix constant $k$ and $K$ such that the conclusions of Theorem \ref{thm:CDTheExample} and Corollary \ref{cor:main} hold with $N>1$.
    \begin{enumerate}
        \item For every $f\in \mathscr{F}_k$ the metric measure space $(X_f,\di_\infty, \m_{f,K})$ satisfy the strict $\cd(0,N)$ condition.
        \item Given $f\in \overbar{\mathscr{F}_k}$ increasing and such that $\{f=0\}=[-1,0]$, the metric measure space $(X_f,\di_\infty, \m_{f,K})$ does not satisfy the strict $\cd(0,N)$ condition.
    \end{enumerate}
    In particular, according to what we did in the proof of Corollary \ref{cor:main}, we conclude that the strict $\cd(0,N)$ condition is not stable with respect to the measured Gromov-Hausdorff convergence.
\end{prop}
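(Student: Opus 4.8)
The plan is to establish the two assertions separately, then invoke the measured Gromov--Hausdorff convergence already constructed in the proof of Corollary \ref{cor:main} to deduce non-stability. For assertion (1), I would revisit the proof of Theorem \ref{thm:CDTheExample} and observe that the argument there is in fact \emph{local} and \emph{pointwise}: the inequality \eqref{eq:condition} is verified for $\mu_0$-almost every $(x,y)$ via the $(0,N')$-convexity of an explicit interpolating density, built from the fixed optimal map $T$ and the fixed midpoint selection $M$. The strict $\cd(0,N)$ condition (Definition \ref{def:strictCD}) requires convexity of $S_{N'}$ not only along the chosen geodesic $t\mapsto(e_t)_\#\eta$, but also along every reweighted geodesic $t\mapsto(e_t)_\#(f\eta)$ with $\int f\,\de\eta=1$. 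Since reweighting by a bounded $f$ only multiplies the interpolating densities by a factor depending on the starting geodesic (not on $t$), the same pointwise convexity estimates survive: the Jacobian computations and the $(0,N')$-convexity of $-\log$ of the interpolated density are unaffected by an $f$-dependent prefactor. Thus the very proof of Theorem \ref{thm:CDTheExample}, applied to the measure $f\eta$ in place of $\eta$, yields the strict condition for every $f\in\mathscr{F}_k$. I would phrase this by remarking that Proposition \ref{prop:jacobi} and the per-point analysis in the four cases $V,D,H_0,H_1$ go through verbatim for weighted geodesics, so $(X_f,\di_\infty,\m_{f,K})$ satisfies strict $\cd(0,N)$.

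For assertion (2), the strategy is to exhibit a specific pair $\mu_0,\mu_1\in\Prob_{ac}(X_f,\m_{f,K})$ for which \emph{no} $\eta$ representing a geodesic between them can simultaneously produce convexity along all reweightings. The geometric obstruction comes from the collapsed region $\{f=0\}=[-1,0]$, where the space is one-dimensional while on $[0,1]$ it is two-dimensional. I would choose marginals whose optimal transport forces mass to branch at the junction $x=0$: for instance $\mu_0$ supported near the one-dimensional segment and $\mu_1$ spread over the two-dimensional part, so that geodesics emanating from a common region must split. The key point is that in the collapsed part the geodesics are highly branching, and any $\eta$ is concentrated on a branching set. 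Reweighting $\eta$ by an $f$ that isolates one branch then produces a geodesic whose endpoint densities concentrate in a way that violates the required $S_{N'}$-convexity; concretely, I would select $f$ supported on geodesics passing through the singular transition so that the intermediate density becomes too large relative to the endpoint densities. This mirrors the construction in Section 7 of \cite{MR4402722} for the $\cd(0,\infty)$ case, and I would follow that template, checking that the finite-dimensional entropy $S_{N'}$ detects the same failure (the singular part invisible to $S_{N'}$, cf.\ Remark \ref{rmk:Ent}, must be handled by keeping the competing densities absolutely continuous).

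The main obstacle I anticipate is assertion (2): unlike the $\cd(0,\infty)$ setting, where $\Ent=+\infty$ on any singular measure makes the failure of convexity easy to certify, here $S_{N'}$ ignores singular parts, so I must ensure the reweighted geodesic $(e_t)_\#(f\eta)$ stays absolutely continuous and that the convexity defect is genuinely quantitative in the finite-$N'$ entropy. This requires a careful choice of $f$ and of the marginals so that the branching at $x=0$ forces an honest strict-inequality violation $S_{N'}(\mu_{1/2})>\frac12 S_{N'}(\mu_0)+\frac12 S_{N'}(\mu_1)$ for some reweighting, despite the freedom in selecting $\eta$. I would argue that any representing $\eta$ must charge branching geodesics (because the one-dimensional collapsed region cannot absorb the two-dimensional target without splitting), and then isolate a single branching bundle by reweighting to expose non-convexity. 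Finally, combining (1) and (2) with the convergence \eqref{eq:mGHconv} from Corollary \ref{cor:main}, the sequence $(X_{f+\varepsilon_n},\di_\infty,\m_{f+\varepsilon_n,K})$ of strict $\cd(0,N)$ spaces converges to a space failing strict $\cd(0,N)$, establishing non-stability.
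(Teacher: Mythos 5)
Your part (1) is correct and is exactly the intended argument: the paper's own proof of this proposition is a deferral to Section 7 of \cite{MR4402722}, and the point there, which you identify, is that the inequality \eqref{eq:condition} established in the proof of Theorem \ref{thm:CDTheExample} involves only the reference density $m$ and the Jacobians $J_T$, $J_{M\circ(\id,T)}$ --- never the densities of the marginals --- while the selected plan is induced by maps of the starting point, so a reweighting $f\eta$ multiplies the interpolating densities at all times by a single geodesic-dependent factor that cancels in \eqref{eq:sufcond}. (You gloss over the fact that Definition \ref{def:strictCD} requires convexity along the whole reweighted geodesic rather than a midpoint inequality; this is recoverable because the $(0,N')$-convexity produced in the proof of Theorem \ref{thm:CDTheExample} holds on all of $[0,1]$, and the paper glosses over the same point.) Your final deduction of non-stability from (1), (2) and \eqref{eq:mGHconv} matches the paper.

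Part (2), however, contains two genuine gaps. First, your violation mechanism is a non sequitur: the fact that every representing $\eta$ must charge branching geodesics does not contradict the strict $\cd(0,N)$ condition, and ``reweighting to isolate one branch'' proves nothing by itself --- an isolated branch is just a sub-bundle whose reweighted marginals may perfectly well satisfy the $S_{N'}$-convexity inequality. Indeed branching is compatible with strict $\cd$: your own part (1) shows that the highly branching spaces $(X_f,\di_\infty,\m_{f,K})$, $f\in\mathscr{F}_k$, are strict $\cd(0,N)$, and $(\R^2,\di_\infty,\Leb^2)$ is another example. Unavoidable branching only rules out the \emph{very strict} condition, via Theorem \ref{thm:schultz}, and the paper keeps strict and very strict carefully separated for precisely this reason; your argument, if it worked, would erase that distinction. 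Second, your declared ``main obstacle'' is backwards. For finite $N'$ one has $S_{N'}\le 0$ always, $S_{N'}=0$ on purely singular measures, and more generally $S_{N'}(\nu)\ge -\m(\supp\nu)^{1/N'}$ by H\"older, whereas the reweighted endpoints --- bounded reweightings of absolutely continuous measures --- have strictly negative entropies. Hence a reweighted intermediate measure that is singular, or merely concentrated on a set of small $\m$-measure, immediately yields $S_{N'}(\mu^{rw}_{1/2})>\tfrac12 S_{N'}(\mu^{rw}_0)+\tfrac12 S_{N'}(\mu^{rw}_1)$: concentration at intermediate times is the adversary's friend, not an obstacle, and it is exactly how the $\Ent=+\infty$ mechanism of \cite{MR4402722} transposes to finite $N$ (``$0>$ negative average'' replacing ``$+\infty>$ finite average''). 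The adaptation of Section 7 of \cite{MR4402722} accordingly reweights by the position of the geodesics at intermediate times (thin strips of the two-dimensional cusp near $x=0$), exploiting the fact that a genuinely one-dimensional source cannot feed, for all such bundles simultaneously, proportionally spread midpoints and endpoints; by insisting that the reweighted geodesic stay absolutely continuous and hunting for a ``quantitative'' defect you discard the very tool that makes the proof work, and you offer no substitute argument.
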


\noindent Observe also that point 2 in this last proposition, combined with Corollary \ref{cor:main}, shows that the strict $\cd(0,N)$ condition is actually strictly stronger than the (classical) $\cd(0,N)$ condition. 

It is also possible to find a requirement which is intermediate between the strong $\cd$ condition and the strict $\cd$ one. This is called very strict $\cd$ condition and was introduced and studied by Schultz in \cite{schultz2017existence}. The original definition \cite[Definition 1]{schultz2017existence} is given with the dimensional parameter equal to $\infty$, but it can be easily adapted to the finite dimensional case following Definition \ref{def:strictCD}. Generalizing the work of Rajala and Sturm, Schultz proved that the very strict $\cd$ condition is sufficient to deduce a non-branching property on the space. 

\begin{theorem}\label{thm:schultz}
Every very strict $\CD(0,\infty)$ space is weakly essentially non-branching, i.e. for every pair $\mu_0,\mu_1\in\Prob_{ac}(X,\m)$ there exists $\eta\in \Prob(\Geo(X))$ representing a geodesic connecting them, that is concentrated on a non-branching set of geodesics. 
\end{theorem}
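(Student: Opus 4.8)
The plan is to adapt the argument of Rajala and Sturm behind Theorem~\ref{thm:RajalaSturm} to the weaker very strict condition. The conceptual point is that, while the strong $\CD(0,\infty)$ condition grants convexity of $\Ent$ along \emph{every} geodesic, the very strict condition only grants, for each pair of endpoints, a \emph{single} plan $\eta$ together with convexity along a large family of its reweightings $t\mapsto(e_t)_\#(f\eta)$. I would show that this particular $\eta$ is itself concentrated on a non-branching set, which is exactly what the notion of weak essential non-branching requires.

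I would first fix $\mu_0,\mu_1\in\Prob_{ac}(X,\m)$, let $\eta$ be the plan furnished by the very strict $\CD(0,\infty)$ condition, and establish a regularity property: for every admissible weight $f$, the interpolants $(e_t)_\#(f\eta)$ are absolutely continuous. Indeed, the endpoints $(e_0)_\#(f\eta)$ and $(e_1)_\#(f\eta)$ are reweightings of the absolutely continuous measures $\mu_0,\mu_1$ by a bounded density, hence have finite entropy; were some interior interpolant to have a nontrivial singular part, $\Ent$ would equal $+\infty$ there, contradicting the convexity guaranteed by the very strict condition. Thus every reweighting of $\eta$ has absolutely continuous interpolants, and in particular no reweighting can concentrate mass on a lower-dimensional set at an interior time.

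Next, arguing by contradiction, I would assume that $\eta$ charges a branching set with positive mass, i.e.\ that no full-measure set of geodesics is non-branching. By a measurable selection I would extract a positive-mass family of geodesics that pairwise coincide on a common initial portion $[0,s]$ but separate on $(s,1]$, and after a linear reparametrization in time (in the spirit of Remark~\ref{rmk:repar}) I would localize the branching at the single time $s$. Splitting this family into two disjoint pieces of positive mass produces two reweightings $f_A\eta$ and $f_B\eta$; these are admissible test objects for the very strict condition, so $\Ent$ is convex along $t\mapsto(e_t)_\#(f_A\eta)$, along $t\mapsto(e_t)_\#(f_B\eta)$, and along the geodesic induced by their average, which is again an optimal geodesic plan. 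The two induced curves share their restriction to $[0,s]$ but, by branching, differ on $(s,1]$.

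The contradiction will then be extracted from the strict convexity of the density functional $\rho\mapsto\int\rho\log\rho\,\de\m$ underlying $\Ent$: comparing the convexity bound along the average plan with the strict inequality coming from the distinct interpolants on $(s,1]$ forces those interpolants to coincide, against the assumed separation. The step I expect to be the main obstacle is precisely the one where the single-plan nature of the very strict condition is felt. Unlike the strong condition, I may only invoke convexity along curves of the form $(e_t)_\#(f\eta)$ for the \emph{fixed} plan $\eta$; hence every competitor built by restriction, by gluing at the branch time, and by averaging must be exhibited as such a reweighting, and the weights $f_A,f_B$ must be chosen so that, once the absolute continuity of all interpolants is used, the strict-convexity comparison genuinely yields a strict inequality rather than a vacuous one. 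Verifying this admissibility, and arranging the competitors so that strict convexity actually bites, is where the real work of the proof lies.
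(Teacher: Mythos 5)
Your proposal cannot be checked against a proof in the paper, because the paper contains none: Theorem \ref{thm:schultz} is quoted from Schultz \cite{schultz2017existence}, so the only meaningful benchmark is the argument given there. Measured against it, your opening moves are right (all competitors must be reweightings $f\eta$ of the single distinguished plan; interior interpolants of such reweightings are absolutely continuous — modulo a reduction to marginals with finite entropy, which $\mu_i\in\Prob_{ac}(X,\m)$ alone does not give), but your endgame is a genuine non sequitur. You want to conclude by showing that the interpolants of $f_A\eta$ and $f_B\eta$ coincide, ``against the assumed separation''. The assumed separation, however, lives on $\Geo(X)$, not on $X$: two mutually singular reweightings that agree on $[0,s]$ can have \emph{identical} time-$t$ marginals for every $t$. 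Concretely, on the cylinder $\R\times(\R/\mathbb{Z})$ with the distance $\max\{|x-x'|,\di_{\R/\mathbb{Z}}(y,y')\}$ and the product of Lebesgue measures, transport $\mu_0$, uniform on $[0,1]\times\R/\mathbb{Z}$, to $\mu_1$, uniform on $[2,3]\times\R/\mathbb{Z}$, by $\eta=\frac12(\eta_A+\eta_B)$, where $\eta_B$ translates each point horizontally and $\eta_A$ translates horizontally on $[0,\frac12]$ and then diagonally, shifting the circle coordinate by $\frac12$ over $[\frac12,1]$. Both are optimal geodesic plans, $\eta_A\perp\eta_B$, they agree on $[0,\frac12]$, yet by rotation invariance $(e_t)_\#\eta_A=(e_t)_\#\eta_B$ for \emph{every} $t$. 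All the entropies your argument compares are then equal functions of $t$, the strict inequality you invoke never appears, and still $\eta$ is concentrated on branching geodesics. The branch-splitting competitors you propose are structurally blind to exactly the phenomenon to be excluded.

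The same example shows which competitor is actually needed and is missing from your outline: the weight $f=2\cdot\mathbbm{1}_{\{\gamma(0)\in\R\times[0,1/2)\}}$, measurable with respect to $e_0$ and cutting \emph{across} the branches rather than along them, produces an interpolant entropy equal to $\log 2$ on $[0,\frac12]$ and to $2(1-t)\log 2$ on $[\frac12,1]$, which is not convex; this is how the very strict condition rules out the configuration above. This is also the structure of Schultz's actual proof: he shows that the distinguished plan $\eta$ is induced by a map from the initial point (the disintegration of $\eta$ with respect to $e_0$ is Dirac), using reweightings of this initial-data type and playing the quantified mixing inequality (the entropy of an average of two measures lies below the average of their entropies by a deficit which is $\log 2$ exactly when they are mutually singular and $0$ exactly when they agree) against the continuity of $t\mapsto\Ent\big((e_t)_\#(f\eta)\big)$ at the splitting time, itself a consequence of convexity along reweightings plus lower semicontinuity of $\Ent$. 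Weak essential non-branching then follows formally: two geodesics in the image of a map $T:X\to\Geo(X)$ that agree on some $[0,s]$ have the same initial point, hence coincide. Finally, your worry about gluing is well founded but cannot be ``verified'' away: a plan glued at the branch time out of pieces of $f_A\eta$ and $f_B\eta$ is concentrated on concatenated curves that in general do not lie in $\supp\eta$, so it is never of the form $f\eta$. That obstruction is intrinsic; it is precisely why the very strict condition yields only \emph{weak} essential non-branching, whereas the strong condition of Theorem \ref{thm:RajalaSturm} yields the full version.
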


\noindent Being weakly essentially non-branching, is possibly the weakest meaningful non-branching condition that can be defined in metric measure spaces. However, as a consequence of Corollary \ref{cor:main}, we can observe that is not satisfied in every $\cd(0,N)$ space. In fact, taking $f\in \overbar{\mathscr{F}_k}$ increasing and such that $\{f=0\}=[-1,0]$, it is not difficult to see that every $\eta\in \Prob(\Geo(X))$, representing a geodesic connecting a marginal $\mu_0\in \Prob_{ac}(X,\m)$, concentrated in the one dimensional part $X_f \cap ([-1,0]\times \R)$, and a marginal $\mu_1\in \Prob_{ac}(X,\m)$, concentrated in the two dimensional part $X_f \cap ([0,1]\times \R)$, cannot be concentrated on a non-branching set of geodesics. Moreover, it is possible to observe that it cannot exist an optimal transport plan between $\mu_0$ and $\mu_1$, which is induced by a map. This proves that the essentially non-branching assumption is necessary to guarantee the existence of an optimal transport map, between absolutely continuous marginals in $\cd(0,N)$ spaces (cfr. \cite{MR2984123,rajalasturm,MR3691502,magnabosco2021optimal}). 

Finally, we point out that Proposition \ref{prop:nonstab} suggest that also the the very strict $\cd$ condition should not be stable with respect to the measured Gromov-Hausdorff convergence. However, in \cite{MR4432355} a stability result for the very strict $\cd$ condition is proved, under some additional metric assumptions on the converging sequence and on the limit space.

\subsection{Non-compact version}\label{sec:conc3}

In this last section we present a non-compact example of a singular $\cd(0,N)$, which structure is substantially analogous to the one of the space considered in Section \ref{sec:conc1}. In particular, given $k$ sufficiently small, introduce the metric measure space $(\X_k, \di_\infty, \m_{k,K})$ defined as
\begin{equation*}
    \X_k = \{0\}\times(-\infty,0] \,\, \cup \, \, \{(x,y)\in \setR^2\suchthat x \in [0,+\infty) \text{ and } 0\leq y\leq kx\} =: L \cup C^k,
\end{equation*}
\begin{equation*}
    \m_{k,K} := \mathbbm{1}_{\{x\leq 0\}}\cdot  C_K \cdot\mathcal   H^1 |_{\{y=0\}} + \mathbbm{1}_{\{x>0\}} \cdot\frac{1}{kx} \exp\left(-K \left(\frac{y}{kx} \right)^2\right)\cdot \Leb^2|_{X_f},
\end{equation*}
where $C_{K}= \int_0^1 e^{-K y^2} \de y$.

\begin{figure}[h]
\begin{center}

\begin{tikzpicture} 

\shade[left color=black!37!white,right color=black!5!white] (0,0)--(5.5,0)--(5.5,2.75)--(0,0)--cycle;
\draw[thick](-5,0)--(5,0);
\draw[dashed, thick](5,0)--(6,0);
\draw[dashed, thick](-5,0)--(-6,0);
\draw[thick](0,0)--(5,2.5);
\draw[dashed,thick](5,2.5)--(6,3);
\filldraw[black] (0,0) circle (1pt);

\node at (3,1.7)[label=north:${y=kx}$] {};
\node at (-3,0)[label=north:${L}$] {};
\node at (4,0.5)[label=north:${C^k}$] {};
\end{tikzpicture}

\caption{The metric measure space $(\X_{k},\di_\infty, \mathfrak{m}_{k,K})$.}
\label{fig:example12}
\end{center}
\end{figure}
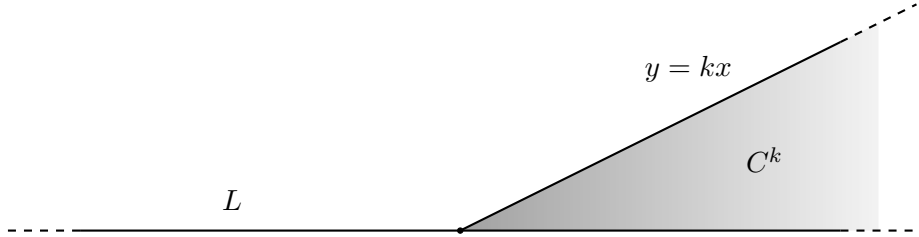

\begin{prop}\label{prop:non-compact}
    For suitable constants $k$, $K$ and $N>1$, the metric measure space $(\X_k, \di_\infty, \m_{k,K})$ satisfies the $\cd(0,N)$ condition.
\end{prop}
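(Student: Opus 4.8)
The plan is to reduce the non-compact space $(\X_k, \di_\infty, \m_{k,K})$ to the compact setting already handled by Corollary \ref{cor:main}, via an exhaustion-and-stability argument. The key observation is that the cone $C^k$ is exactly the restriction of a space $X_f$ (with $f(x)=kx$) to a region, and the left half-line $L$ is the degenerate one-dimensional piece arising in the limit when $f$ vanishes. So morally $(\X_k, \di_\infty, \m_{k,K})$ is an unbounded analogue of the spaces $(X_f, \di_\infty, \m_{f,K})$ with $f \in \overbar{\mathscr F_k}$ increasing and $\{f=0\}=(-\infty,0]$; the only genuinely new feature is non-compactness.

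First I would verify directly that the local structure is covered by the existing computations. The measure $\m_{k,K}$ on $C^k$ has the same form $\frac{1}{f(x)}\exp(-K(y/f(x))^2)\,\Leb^2$ with $f(x)=kx$, and although $f(x)=kx$ is not itself in $\mathscr F_k$ (it is unbounded and vanishes at $0$), on any compact truncation it agrees with a rescaled copy of an admissible profile: the defining inequalities $|f'|\le k$ and $|f''|\le 1$ hold (indeed $f'\equiv k$, $f''\equiv 0$), and $0<f<3k$ fails only because of unboundedness, not because of the differential constraints. This suggests the right truncation. For $R>0$ let $\X_k^R$ be the intersection of $\X_k$ with the strip $\{-R \le x \le R\}$, equipped with the restricted distance and the restricted (renormalized) measure. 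Each $\X_k^R$, after the affine rescaling $(x,y)\mapsto (x/R, y/R)$ carrying the slope-$k$ cone to a profile defined on $[-1,1]$, becomes isometric up to scaling to a space $(X_g, \di_\infty, \m_{g,K})$ with $g\in \overbar{\mathscr F_k}$, $g$ increasing and $\{g=0\}=[-1,0]$; since the $\cd(0,N)$ condition is invariant under the dilation of both metric and measure (the parameter $N$ is dimensional and scale-free when $K=0$), Corollary \ref{cor:main} applies and each truncation $\X_k^R$ is $\cd(0,N)$ with the \emph{same} $N$.

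The second step is to pass from the truncations to the whole space. I would invoke the stability of the $\cd(0,N)$ condition under measured Gromov--Hausdorff convergence (Remark \ref{rmk:mGH}), noting that as $R\to\infty$ the pointed spaces $\X_k^R$ converge to $\X_k$ in the appropriate pointed mGH sense, with the basepoint taken at the apex $(0,0)$. The critical point here — and the reason the uniform-in-$R$ control of $N$ in the previous paragraph matters — is that stability requires a single dimensional bound $N$ along the whole sequence; this is exactly what the $f$-independence of $N$ in Theorem \ref{thm:CDTheExample} provides, after the scaling identification. One must be slightly careful that the reference measures are only locally finite and that mGH convergence of non-compact (pointed) spaces is the correct framework, but these are the standard adaptations.

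The main obstacle I anticipate is the non-compactness itself, which interacts with the $\cd$ definition in two ways. First, the stability statements quoted in Remark \ref{rmk:mGH} (via \cite{villani2008}) are cleanest for compact or at least locally compact, normalized spaces, so I would need to phrase the convergence in the pointed/local mGH topology and check that the $\cd(0,N)$ condition is stable there; alternatively, and perhaps more cleanly, I would avoid global convergence entirely and argue directly that for any two compactly supported $\mu_0,\mu_1\in\Prob_{ac}(\X_k,\m_{k,K})$ their supports lie in some truncation $\X_k^R$, that the $W_2$-geodesic selected inside $\X_k^R$ stays within a slightly larger truncation, and that entropy convexity is a local computation unaffected by the ambient truncation — so the midpoint construction and Proposition \ref{prop:jacobi} apply verbatim with the $N$ from Theorem \ref{thm:CDTheExample}. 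This local argument sidesteps the delicate pointed-mGH stability and is likely the safest route; the verification that geodesics between compactly supported marginals remain in a bounded region (using that $\di_\infty$-geodesics do not travel arbitrarily far and that the apex is the only gluing point) is the one place requiring genuine, if routine, care.
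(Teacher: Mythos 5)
Your reduction stands or falls with the claim that the rescaled truncation $\X_k^R$ is, up to scaling, a space $(X_g,\di_\infty,\m_{g,K})$ with $g\in\overbar{\mathscr F_k}$ increasing and $\{g=0\}=[-1,0]$. That claim is false, and not reparably so within your scheme. The profile of the rescaled truncation is $g_0(x)=k\max(x,0)$, which has a corner at the apex with slope jump $k$: it is not $C^2$, whereas membership in $\overbar{\mathscr F_k}$ requires $g\in C^2([-1,1])$ with $|g''|\le 1$. Nor can you repair this by smoothing the corner and applying mGH stability once more: if $g\in C^2$ satisfies $|g''|\le 1$ and $\|g-g_0\|_\infty\le\varepsilon$, then comparing the difference quotients of $g$ on $[-a,0]$ and $[0,a]$ via the mean value theorem gives $k-4\varepsilon/a\le 2a$ for every $a\in(0,1)$, hence $\varepsilon\ge k^2/32$ (take $a=k/4$). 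So \emph{every} profile in $\overbar{\mathscr F_k}$ stays at uniform distance of order $k^2$ from $g_0$; equivalently, an admissible profile vanishing on $[-1,0]$ lifts off at most quadratically, $g(x)\le x^2/2$, never linearly. Since $k$ is a fixed constant and the slope-$k$ corner is a scale-invariant feature, no choice of rescaling helps: the truncations are neither within the scope of Corollary \ref{cor:main} nor mGH limits of spaces within its scope. This is fatal to both variants of your argument, the pointed-mGH exhaustion and the ``local'' fallback, because both take as their base case that the truncations are $\cd(0,N)$ with a uniform $N$, and that is precisely what remains unproved.

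This is not a technicality: the apex is where the whole difficulty of the proposition is concentrated, and the paper attacks it directly rather than by truncation. One first proves that the cone $(C^k,\di_\infty,\m_{k,K}|_{C^k})$ is $\cd(0,N)$ by redoing --- not citing --- the computations of Section \ref{section:CDproof} with $f(x)=kx$; this profile violates both $0<f$ (at the apex) and $f<3k$ (at infinity), and the paper observes that the identity $(kx)''=0$ can substitute for the missing bound $f<3k$ in those estimates, so the application is by no means ``verbatim''. The half-line $L$ is trivially $\cd(0,N)$. The genuinely new step, which your proposal never confronts, is the mass that an optimal plan moves \emph{between} $L$ and $C^k$: the paper decomposes the plan into the sub-transports $L\to L$, $C^k\to C^k$ and $L\to C^k$, shows that entropy convexity may be checked on each separately, and for the cross term constructs an ad hoc geodesic selection following \cite{ketterer2014failure} and Section 7 of \cite{MR4402722}. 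In your write-up this difficulty has not disappeared; it is hidden inside the false identification of the truncated corner space with a space covered by Corollary \ref{cor:main}.
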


\begin{proof}[Sketch of the proof]
    With the exact same strategy developed in Section \ref{section:CDproof}, it is possible to prove that, for suitable constants $k$, $K$ and $N>1$, $(C^k,\di_\infty, \m_{k,K}|_{C^k})$ is a $\cd(0,N)$ space. In fact, all the steps can be repeated with minor changes and they give the same results.  In particular, every computation in Section \ref{section:CDproof} that needed the assumption $f< 3k$ in \ref{eq:defFk}, can be done also in this specific case, taking advantage of the fact that $(kx)''=0$. On the other hand, the space $(L,\di_\infty, \m_{k,K}|_{L})$ satisfies the $\cd(0,N)$ as well. 
    
    Now consider the full space $\X_k=L\cup C^k$, take a pair $\mu_0,\mu_1\in \Prob_{ac}(\X_k,\m_{k,K})$ and an optimal transport plan $\pi$ between them. The plan $\pi$ will send part of the mass of $\mu_0$ in $L$ in part of the mass of $\mu_1$ in $L$, part of the mass of $\mu_0$ in $C^k$ in part of the mass of $\mu_1$ in $C^k$ and part of the mass of $\mu_0$ in $L$ in part of the mass of $\mu_1$ in $C^k$ (or vice versa). It is possible to show that it is sufficient to prove entropy convexity on each of this ``sub-transport''. In particular, for the first two, this follows from the first part of the proof, therefore it is enough to prove entropy convexity for transports from $L$ to $C^k$. This is not trivial, but can be done finding a clever geodesic selection in accordance to the transport plan, following some ideas developed in \cite{ketterer2014failure} and \cite[Section 7]{MR4402722} (see also Remark 1 in \cite{ketterer2014failure}).
\end{proof}

\noindent Proposition \ref{prop:non-compact} allows to extend Theorem 3 of \cite{ketterer2014failure} to the setting of $\cd(0,N)$ spaces, proving the failure of topological splitting. Moreover, the metric measure space $(\X_k, \di_\infty, \m_{k,K})$ is invariant with respect to rescalings centred in the origin, thus it is the (unique) metric measure tangent of itself in the origin. Therefore $(\X_k, \di_\infty, \m_{k,K})$ provides an example of a $\cd(0,N)$ spaces having a metric measure tangent with a singular structure. 


\bibliography{bibliografia}

\bibliographystyle{abbrv} 

\end{document}